\newif\ifels
\renewcommand{\cite}{\citep}
\definecolor{midgrey}{gray}{0.6}
\definecolor{darkgrey}{gray}{0.4}
\newtheorem{result}{Result}[section]
\newtheorem{thm}[result]{Theorem}
\newtheorem{defn}[result]{Definition}
\newtheorem{lem}[result]{Lemma}
\newtheorem{cor}[result]{Corollary}
\newtheorem{prop}[result]{Proposition}
\newenvironment{proof}
 {{\sl Proof.}\hspace*{1 ex}}%
 {{\nopagebreak\hspace*{\fill}$\Box$\par\vspace{12pt}}}
\renewcommand{\qed}{}
\newcommand{\qed}{\hfill \ensuremath{\Box}}
\newcommand{\transpose}[1]{{#1}^{\top}}
\newcommand{\xapprox}{\tilde{x}}
\newcommand{\val}[1]{\mathsf{v}(#1)}
\newcommand{\dly}{{z}}
\newcommand{\Kinf}{\stackanchor[1pt]{\tiny$\mathbb{E}$}{\tiny$\infty$}}
\DeclareMathOperator*{\argmin}{arg\,min}
\definecolor{plgreen}{rgb}{0,0.5,0}
\begin{document}

\ifels

\title{Random projections for conic programs\thanks{This paper has received funding from the European Union's Horizon 2020 research and innovation programme under the Marie Sklodowska-Curie grant agreement n.~764759 ``MINOA''.}}

\author[lix]{Leo Liberti}
\ead{liberti@lix.polytechnique.fr}

\author[riken]{Pierre-Louis Poirion}
\ead{pierre-louis.poirion@riken.jp}

\author[fpt]{Ky Vu}
\ead{vukhacky@gmail.com}

\address[lix]{LIX CNRS, Ecole Polytechnique, Institut Polytechnique de Paris, 91128 Palaiseau, France}

\address[riken]{RIKEN Center for Advanced Intelligence Project, Tokyo, Japan}

\address[fpt]{Dept.~of Mathematics, FPT University, Hanoi, Vietnam}

\else

\thispagestyle{empty}
\begin{center} 

{\LARGE Random projections for conic programs}
\par \bigskip
{\sc Leo Liberti${}^3$, Pierre-Louis Poirion${}^2$, Ky Vu${}^1$}
\par \bigskip
 {\small
   \begin{enumerate}
   \item {\it LIX CNRS, \'Ecole Polytechnique, Institut Polytechnique de Paris, F-91128 Palaiseau, France} \\ Email:\url{liberti@lix.polytechnique.fr}
   \item {\it RIKEN Center for Advanced Intelligence Project, Tokyo, Japan}
   \item {\it Dept.~of Mathematics, FPT University, Hanoi, Vietnam} 
   \end{enumerate}
 }
\par \medskip \today
\end{center}
\par \bigskip

\fi

\begin{abstract}
We discuss the application of random projections to conic programming: notably linear, second-order and semidefinite programs. We prove general approximation results on feasibility and optimality using the framework of formally real Jordan algebras. We then discuss some computational experiments on randomly generated semidefinite programs in order to illustrate the practical applicability of our ideas.
\ifels
\begin{keyword}
Mathematical Programming, Approximation, Johnson-Lin\-denstrauss Lemma, Jordan algebra.
\MSC 90C22\sep 90C06
\end{keyword}
\else
\\ \textbf{Keywords:} Mathematical Programming, Approximation, Johnson-Lin\-denstrauss Lemma, Jordan algebra.
\fi
\end{abstract}

\ifels
\maketitle
\fi

\section{Introduction}
\label{s:intro}
In this paper we study Random Projection (RP) techniques for Symmetric Conic Programming (SCP): the class of optimization problems over symmetric cones. This class includes several convex optimization classes, such as Linear Programming (LP), Second-Order Cone Programming (SOCP), and Semidefinite Programming (SDP). It turns out that a symmetric cone can be represented as the cone of squares of a formally real Jordan algebra (FRJA) \cite{jordanalg}. Using this framework it is possible to obtain theoretical results that apply to LP, SOCP and SDP alike \cite{alizadeh}. We employ this framework in order to derive approximate feasibility and optimality results related to the application of RPs to SCPs. We then exhibit some computational experiments to show that our techniques can be used to approximately solve large scale SDPs in acceptable amounts of time.

RPs are random matrices that reduce the dimensions of the vectors in a given set while approximately preserving all pairwise Euclidean distances with high probability (whp), which means that the probability of failure decreases exponentially fast with increasing size of the reduced dimension. The normal application setting of RPs is to numerical data. This can speed up algorithms (at some cost in terms of the solution quality) which are essentially based on Euclidean distances, such as, e.g., k-means or k-nearest neighbours \cite{boutsidis2010,indyk}. The success of the application of RP to these algorithms is not surprising insofar as RPs give approximation guarantees for Euclidean distances by definition. This work continues the sequence of works \cite{jll-dam,jllmor,ipco19,rpqp}, which support the much more counter-intuitive statement that a Mathematical Programming (MP) formulation might be approximately invariant (as regards feasibiliy and optimality) w.r.t.~randomly projecting the input parameters. In particular, in this work we generalize the results of \cite{jllmor} from the LP setting to all SCPs. Similarly to \cite{jllmor}, our results assume that: (a) feasible instances are explicitly bounded; (b) strong SCP duality holds. 

This work is motivated by the need to solve ever larger conic optimization problems. Its impact extends to LP, SOCP, SDP. Its importance is magnified by the fact that current SDP solution technology scales quite poorly in practice. 

As far as we know, this work provides the first theoretical analysis of the application of RPs to the general class of SCPs. Notwithstanding, there exist previous works about the application of RPs to specific types of symmetric cones. The paper \cite{jllmor}, for example, is about LPs. In \cite{bluhm2019}, the authors reduce the dimension of SDPs using the RP matrix mapping $M\mapsto TMT^\top$, where $T$ is a RP matrix. This approach is dual to ours, as we reduce the number of constraints rather than the dimension of the problem. We also note some literature results about fast low-rank approximations of PSD matrices using RPs \cite{tropp2,clarkson2}. Based on these works, \cite{grimmer} exploits RPs to propose a bundle method for approximately solving SDPs without ever storing the whole constraint matrix and right hand side vector. While a small fraction of the theoretical results we present here bear some similarity to those in \cite{tropp2,clarkson2,grimmer}, the context and proofs are different. The main difference is that the results in \cite{tropp2,clarkson2,grimmer} refer to the error yielded by some given algorithms deployed on randomly projected input. The results in this paper are about the formulation itself: we provide error guarantees on the optimality and feasibility of projected formulations w.r.t.~the original ones independently of the solution method. 

We note that even though this work generalizes the results of \cite{jllmor}, the proof techniques are completely different, and justifiably so. A cornerstone of the results in \cite{jllmor} is that $\transpose{T}Tx$ is approximately like $x$ for a random projection matrix $T$ (see below for a definition), whereas in the general conic case addressed in this paper, where $x$ is a PSD matrix, the corresponding entity would not even be symmetric: we sidestep this difficulty by using a completely different framework, i.e.~that of FRJA. Moreover, in \cite{jllmor} we extensively use the usual partial order on $\mathbb{R}^n$, which FRJA abstracts from by generalization to symmetric cones. Lastly, FRJA is essential in Sect.~\ref{s:erretr}, where we estimate the infeasibility of the retrieved solution.

A RP is a random matrix sampled from a sub-Gaussian distribution \cite{dirksen}. Among other properties, applying a RP to a vector in a given set yields a vector with approximately the same norm. One famous example of the application of RPs is the Johnson-Lindenstrauss lemma (JLL) \cite{jllemma}, which shows that a set of points can be embedded into a much lower dimension while keeping all pairwise distances approximately the same. 
\begin{defn}
\label{defn:rp}
  A random matrix $T \in \mathbb{R}^{d \times m}$ is called a {\it random projection} if there are universal constants $c$ and $\mathcal{C}$ such that, for any $x \in \mathbb{R}^m$ and $0 < \varepsilon < 1$ we have:
  \begin{equation*} 
    \mbox{\sf Prob} \big[ (1 - \varepsilon) \|x\|^2_2 \le \|Tx\|^2_2 \le (1 + \varepsilon) \|x\|^2_2\big] \ge 1-ce^{-\mathcal{C}\varepsilon^2 d}.
  \end{equation*}
\end{defn}
An easy corollary of the JLL is the preservation of scalar products. 
\begin{prop}
  \label{scalarJLL}
  For $T \in \mathbb{R}^{d \times m}$ a RP with $\varepsilon\in(0,1)$ and $x, y \in \mathbb{R}^m$, we have:%
  \begin{equation*} 
    \mathsf{Prob}\big[\,\langle x, y \rangle - \varepsilon \|x\|_2 \|y\|_2 \le 
    \langle Tx, Ty \rangle \le 	\langle x, y \rangle + \varepsilon \|x\|_2 \|y\|_2\,\big] \ge 1-2ce^{-\mathcal{C}\varepsilon^2 d}.
  \end{equation*}
\end{prop}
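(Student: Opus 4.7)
The plan is to derive the scalar-product bound from the norm bound of Definition~\ref{defn:rp} via the polarization identity, invoking the JLL exactly twice so that a union bound over those two events accounts for the factor of~$2$ in the failure probability.

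First I would reduce to the case $\|x\|_2 = \|y\|_2 = 1$. The cases $x=0$ or $y=0$ are trivial. Otherwise, setting $\hat x = x/\|x\|_2$ and $\hat y = y/\|y\|_2$, bilinearity of the inner product together with linearity of $T$ give $\langle Tx, Ty\rangle = \|x\|_2\|y\|_2\,\langle T\hat x, T\hat y\rangle$ and $\langle x, y\rangle = \|x\|_2\|y\|_2\,\langle \hat x, \hat y\rangle$, so dividing the target inequality through by $\|x\|_2\|y\|_2$ reduces the claim to the unit-vector case.

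Next, I would use the polarization identity in the form
$$4\langle x, y\rangle = \|x+y\|_2^2 - \|x-y\|_2^2, \qquad 4\langle Tx, Ty\rangle = \|T(x+y)\|_2^2 - \|T(x-y)\|_2^2,$$
where the second identity uses linearity of $T$. I would then apply Definition~\ref{defn:rp} separately to the vectors $x+y$ and $x-y$. Each of the two norm-preservation conclusions holds with probability at least $1-ce^{-\mathcal{C}\varepsilon^2 d}$, so a union bound says both hold simultaneously with probability at least $1-2ce^{-\mathcal{C}\varepsilon^2 d}$. On this good event, subtracting the two polarization identities and using $|\|T(x\pm y)\|_2^2 - \|x\pm y\|_2^2| \le \varepsilon\|x\pm y\|_2^2$ yields
$$|\langle Tx, Ty\rangle - \langle x, y\rangle| \le \tfrac{\varepsilon}{4}\bigl(\|x+y\|_2^2 + \|x-y\|_2^2\bigr) = \tfrac{\varepsilon}{2}\bigl(\|x\|_2^2 + \|y\|_2^2\bigr),$$
which in the unit-vector case equals $\varepsilon$. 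Undoing the normalization multiplies both sides by $\|x\|_2\|y\|_2$, producing the claimed inequality.

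The main subtlety is keeping the constant tight: if one skips the normalization and applies polarization directly to the original $x,y$, one only obtains the parallelogram-style bound $\tfrac{\varepsilon}{2}(\|x\|_2^2 + \|y\|_2^2)$, which by AM--GM is weaker than $\varepsilon\|x\|_2\|y\|_2$ unless $\|x\|_2=\|y\|_2$. Normalizing first, so that $\|x+y\|_2^2 + \|x-y\|_2^2 = 4$, is precisely what is needed to absorb the factor of two and recover the sharp $\varepsilon\|x\|_2\|y\|_2$ scaling after rescaling back.
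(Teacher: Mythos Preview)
Your argument is correct: normalize to unit vectors, apply the polarization identity, invoke Definition~\ref{defn:rp} once for $x+y$ and once for $x-y$, and take a union bound; the parallelogram law then gives exactly $\varepsilon$ in the unit case, and rescaling yields $\varepsilon\|x\|_2\|y\|_2$. The paper does not prove this proposition inline but simply refers the reader to \cite[Prop.~1]{jllmor}, whose proof is precisely this standard polarization argument, so your approach coincides with the intended one.
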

See \cite[Prop.~1]{jllmor} for a proof. We also derive a variant of this result for later use.
\begin{cor}
  \label{cor:scjll}
  For $T\in\mathbb{R}^{d\times m}$ a RP with $\varepsilon\in(0,1)$ and any $x\in\mathbb{R}^m$, we have:%
  \begin{equation*}
    \mathsf{Prob}\big[\|(\transpose{T}T x)_i - x_i\|_\infty \le \varepsilon \|x\|_2\big]\ge 1-2mce^{-\mathcal{C}\varepsilon^2 d}.
  \end{equation*}
\end{cor}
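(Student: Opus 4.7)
The plan is to express each coordinate of $\transpose{T}Tx$ as an inner product of two projected vectors, and then apply Proposition~\ref{scalarJLL} together with a union bound over the $m$ coordinates.

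First I would observe that, letting $e_1,\ldots,e_m$ denote the canonical basis of $\mathbb{R}^m$, we have
$$(\transpose{T}Tx)_i = \langle e_i,\transpose{T}Tx\rangle = \langle Te_i,Tx\rangle,\qquad x_i = \langle e_i,x\rangle,$$
so that each deviation $(\transpose{T}Tx)_i - x_i$ equals $\langle Te_i,Tx\rangle - \langle e_i,x\rangle$, which is exactly the quantity controlled by Proposition~\ref{scalarJLL} applied to the pair $(e_i,x)$. Since $\|e_i\|_2 = 1$, the proposition gives
$$\mathsf{Prob}\big[\,|\langle Te_i,Tx\rangle - \langle e_i,x\rangle| \le \varepsilon\|x\|_2\,\big]\ge 1 - 2ce^{-\mathcal{C}\varepsilon^2 d}$$
for each fixed $i\in\{1,\ldots,m\}$.

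Next I would apply a union bound over $i=1,\ldots,m$: the complementary event that some coordinate deviation exceeds $\varepsilon\|x\|_2$ has probability at most $m\cdot 2ce^{-\mathcal{C}\varepsilon^2 d} = 2mce^{-\mathcal{C}\varepsilon^2 d}$. Taking the complement yields the bound claimed in the statement, since controlling every coordinate simultaneously is equivalent to controlling the $\ell_\infty$ norm of the vector of deviations.

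There is no real obstacle here: the only conceptual step is to recognize $(\transpose{T}Tx)_i$ as a scalar product of projected vectors, after which the proof is a direct invocation of Proposition~\ref{scalarJLL} plus a union bound. The factor $m$ in the failure probability is the unavoidable cost of the union bound, and is harmless for our later uses because $d$ will be chosen logarithmic in the relevant problem dimensions.
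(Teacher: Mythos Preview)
Your proposal is correct and follows essentially the same approach as the paper: write $(\transpose{T}Tx)_i = \langle Te_i, Tx\rangle$ and $x_i = \langle e_i, x\rangle$, apply Proposition~\ref{scalarJLL} with $\|e_i\|_2 = 1$, and take a union bound over $i\le m$. There is no meaningful difference between your argument and the paper's.
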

\begin{proof}
  For every $i\le m$, let $e_i=(0,\ldots,0,1_i,0,\ldots,0)\in\mathbb{R}^m$. From Prop.~\ref{scalarJLL} applied to $e_i$ and $x$, we have
  \[\transpose{e}_ix -\varepsilon\|x\|_2 \le \langle Te_i,Tx\rangle = \langle e_i,\transpose{T}Tx\rangle \le \transpose{e}_ix + \varepsilon \|x\|_2\]
for all $i\le m$ whp, whence $-\varepsilon\|x\|_2 \le \langle e_i,\transpose{T}Tx\rangle - \transpose{e}_ix \le \varepsilon \|x\|_2$ and $-\varepsilon\|x\|_2 \le (\transpose{T}Tx - x)_i \le \varepsilon \|x\|_2$. Thus, we have $|(\transpose{T}Tx-x)_i|\le\varepsilon\|x\|_2$ for any $i\le m$. A union bound argument over all $i\le m$ yields $\|\transpose{T}Tx-x\|_\infty\le\varepsilon\|x\|_2$ with the stated probability. \qed
\end{proof}

In this paper, we shall prove that it is possible to find an approximately feasible solution of a given SCP  (referred to as the {\it original} SCP), having an approximately optimal objective function value, by formulating another SCP (referred to as the {\it projected} SCP) subject to a random aggregation, obtained by means of a RP, of the original SCP constraints.

The rest of this paper is organized as follows. In Sect.~\ref{s:scp} we introduce FRJAs, the corresponding spectral theorem, and an abstract version of primal and dual SCP pairs. In Sect.~\ref{s:rpscp} we derive projected reformulations of primal and dual SCP using RPs, and prove approximation results for feasibility and optimality. In Sect.~\ref{s:retrieval} we discuss how to retrieve a solution of the original SCP from the projected SCP. In Sect.~\ref{s:compres} we present a computational validation of the foregoing theory.

\section{SCPs and Jordan Algebras}
\label{s:scp}
We consider the following SCP formulation in standard form:
\begin{equation}
  \left.\begin{array}{rrcl}
    \min & \transpose{c}x && \\
    &  Ax &=& b \\
    & x &\succeq_{\mathcal{K}}& 0,
  \end{array}\right\} \quad (\mathsf{SCP}) \label{scp}
\end{equation}
where $c,x\in\mathbb{R}^n$, $A\in\mathbb{R}^{m\times n}$, $b\in\mathbb{R}^m$, and $x\succeq_{\mathcal{K}} y$ means $x-y\in \mathcal{K}$, where $\mathcal{K}$ is a closed convex pointed cone with non-empty interior \cite{vandenberghe}. In general, $\mathcal{K}$ might be a cartesian product of linear, second-order, positive semidefinite (psd), and possibly other self-dual cones \cite{cvxopt}.

We are going to consider a generalization of (\textsf{SCP}) in the context of FRJAs \cite{faraut,jordanalg}. Using this approach, for example, complementarity and interior point algorithms can be described in terms of algebraic statements, including proofs of polynomial time convergence of interior point methods \cite{alizadeh}.

The formalization of conic programming in FRJA calls for an algebraic interpretation of symbols standing for vectors and matrices (as abstract elements and operators of an algebra). On the other hand, since an algebra {\it is} a vector space, the abstract elements can always be realized as vectors. Accordingly, we shall define a formal abstract inner product $\langle a,b\rangle$ over a FRJA, which may be realized as a quadratic form $\transpose{a}Qb$ for some square matrix $Q$ if we see $a,b$ as elements of the underlying vector space. Similarly, we shall denote $Ax$ by $A\odot x$ (after formally defining $\odot$ by means of the inner product), but then we shall argue that the $A\odot$ operation can be represented by a linear operator $\mathbb{A}$ on vectors, which can be realized by a matrix. 

\subsection{Formally real Jordan algebras}
\label{s:jordan}
We recall some basic facts about FRJAs \cite{jordanalg,faraut}. An algebra over $\mathbb{R}$ is a pair $(\mathbb{E},\circ)$ where $\mathbb{E}$ is a vector space over $\mathbb{R}$ and $\circ:\mathbb{E} \times \mathbb{E} \to \mathbb{E}$ is distributive in both arguments, i.e., for all $a,b,c\in\mathbb{E}$ and $\lambda,\gamma\in\mathbb{R}$, we have (i) $a \circ (\lambda b + \gamma c)  = \lambda (a \circ b) + \gamma (a \circ c)$ and (ii) $(\lambda b + \gamma c) \circ a  = \lambda (b \circ a) + \gamma (c \circ a)$. For each $x \in \mathbb{E}$, we let $L(x)$ be the {\it left multiplication operator}: $L(x)(y) = x \circ y$ for all $y\in\mathbb{E}$.

An algebra $(\mathbb{E},\circ)$ is called a {\it Jordan algebra} if it satisfies:%
\begin{enumerate}[(a)]
\item $\forall a,b\in \mathbb{E} \quad a \circ b = b \circ a$ \quad (commutativity);
\item $\forall a,b \in \mathbb{E}\quad a^2\circ(a\circ b)=a\circ (a^2\circ b)$ \quad (Jordan identity).
\end{enumerate}
The Jordan identity implies that a Jordan algebra is power-associative, i.e.~for all $p,q\in\mathbb{N}$ the matrices $L(x^p)$ and $L(x^q)$ commute. A Jordan algebra $(\mathbb{E},\circ)$ is called {\it formally real} if for any integer $m \ge 1$ and any finite set $\{x_1, \ldots, x_m\}\subset\mathbb{E}$ we have:
\begin{equation}
  \sum_{i\le m} x_i^2=0 \quad \Rightarrow \quad x_1 = \ldots = x_m =0.\label{eq:nonneg}
\end{equation}
Informally speaking, Eq.~\eqref{eq:nonneg} states that square terms in FRJAs must be non-negative. This is a nontrivial statement insofar as elements of $\mathbb{E}$ are not totally ordered in general (so one cannot write $x^2\ge 0$).

\subsection{The spectral decomposition theorem}
\label{s:sdt}
For each element $x\in\mathbb{E}$ the {\it degree} $\deg(x)$ of $x$ is the largest integer $d$ such that $e=x^0,x,\ldots,x^{d-1}$ (where $e$ is the multiplicative unit of $\mathbb{E}$) are linearly independent. The degree of $\mathbb{E}$, denoted by $\deg(\mathbb{E})$, is the maximum degree of all elements over $\mathbb{E}$. This allows us to state the spectral decomposition theorem for FRJAs \cite[Thm.~III.1.1]{faraut}.%
\begin{thm}
  Let $\mathbb{E}$ be a FRJA of degree $r$. For every $x\in\mathbb{E}$ there are sets \[\Lambda(x)=\{\lambda_i\;|\;i\le r\}\subset\mathbb{R}\quad\mbox{and}\quad C(x)=\{c_i\;|\; 1\le i\le r\} \subset\mathbb{E},\] with $\Lambda(x)$ ordered so that $\lambda_i\le\lambda_{i+1}$ for each $i<r$, and $C(x)$ satisfying the following properties:
  \begin{enumerate}[(i)]
  \item $\forall i\le r\;(c_i^2=c_i)$
  \item $c_1+\ldots+c_r=e$
  \item $\forall i\neq j\;(c_i\circ c_j=0)$, such that $x=\lambda_1c_1+\ldots+\lambda_rc_r$.
    \end{enumerate}
  \label{thm:sdt}
\end{thm}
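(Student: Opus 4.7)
My plan is to work inside the associative commutative subalgebra $\mathbb{A}:=\mathbb{R}[x]$ of $\mathbb{E}$ generated by $e$ and $x$, where classical polynomial manipulations apply. Power-associativity (a consequence of the Jordan identity, mentioned just before \eqref{eq:nonneg}) ensures that $\mathbb{A}$ is indeed associative, so $x$ admits a unique monic minimal polynomial $p_x\in\mathbb{R}[t]$ of degree $d=\deg(x)\le r$, and $\mathbb{A}\cong\mathbb{R}[t]/(p_x)$ as algebras. All the structural data $\Lambda(x)$ and $C(x)$ will be extracted from $p_x$.

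The key technical step is to use formal reality to force $p_x$ to split over $\mathbb{R}$ into distinct linear factors $p_x(t)=\prod_{i=1}^{d}(t-\mu_i)$. A repeated real root $(t-\mu)^2\mid p_x$ would give a factorization $p_x=(t-\mu)^2 q$ and the element $z:=(x-\mu e)\,q(x)$, which is nonzero by minimality of $p_x$ yet satisfies $z^2=p_x(x)\,q(x)=0$, contradicting \eqref{eq:nonneg}. An irreducible real quadratic factor, rewritten as $(t+\alpha/2)^2+\gamma$ with $\gamma>0$ and cofactor $q$, would produce $u:=(x+(\alpha/2)e)\,q(x)$ and $v:=\sqrt{\gamma}\,q(x)$ with $u^2+v^2=0$ and $v\ne 0$, again violating \eqref{eq:nonneg}. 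Once $p_x$ splits as claimed, the Lagrange interpolants $c_i':=\prod_{j\ne i}(x-\mu_j e)/(\mu_i-\mu_j)$ for $i=1,\ldots,d$ satisfy properties (i)--(iii) together with $x=\sum_{i}\mu_i c_i'$: each identity holds modulo $p_x(t)$ at the polynomial level (classical Lagrange identities) and therefore transfers to $\mathbb{A}$ by evaluation at $x$.

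The main obstacle is upgrading this coarse decomposition of length $d$ into one of length exactly $r$ when $x$ is not regular, i.e.~when $d<r$. Each idempotent $c_i'$ must then be further split as $c_i'=c_{i,1}+\cdots+c_{i,r_i}$ into pairwise orthogonal primitive idempotents whose ranks satisfy $\sum_i r_i=r$; correspondingly, the eigenvalue $\mu_i$ is repeated $r_i$ times in $\Lambda(x)$. This refinement is performed inside the Peirce subalgebra associated to each $c_i'$, which itself inherits a FRJA structure of degree $r_i$, and is iterated until no further splitting is possible. This is the deepest input from the structure theory of FRJAs and is where one really leaves elementary polynomial algebra behind. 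Concatenating the resulting frames and sorting the eigenvalues in non-decreasing order yields the final $\Lambda(x)$ and $C(x)$ satisfying all three properties claimed by the theorem.
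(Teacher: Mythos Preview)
The paper does not actually prove this theorem: it is quoted from \cite[Thm.~III.1.1]{faraut} as a standard background result, with no in-paper argument to compare against. Your outline is essentially the classical proof one finds in Faraut--Kor\'anyi and related references, and it is correct. The use of formal reality \eqref{eq:nonneg} to exclude repeated real roots and irreducible quadratic factors of the minimal polynomial is exactly the right mechanism (both computations check: $z^2=(x-\mu e)^2q(x)^2=p_x(x)q(x)=0$, and similarly $u^2+v^2=p_x(x)q(x)=0$), and the Lagrange-interpolant construction of the coarse frame is standard. You also correctly identify the one non-elementary step: refining the $d$-term decomposition to an $r$-term one by splitting each $c_i'$ into primitive idempotents inside its Peirce-$1$ subalgebra and invoking that the ranks add up to $\deg(\mathbb{E})=r$. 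That step genuinely requires the structure theory of FRJAs and is precisely why the paper cites the result rather than proving it inline.
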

The elements of $\Lambda(x)$ are the {\it eigenvalues} of $x$. Thm.~\ref{thm:sdt} is a generalization of the spectral decomposition theorem for real symmetric matrices. 

Let $\mathbb{E}$ be a FRJA of degree $r$, and $x \in \mathbb{E}$. We define $\mathsf{tr}(L(x))$ as the trace of the matrix representation of the left multiplication operator $L(x)$. We then define the associative bilinear form
\[T(x,y)=\mathsf{tr}(L(x \circ y))\]
(for $y\in\mathbb{E}$). Since $(\mathbb{E},\circ)$ is formally real, $T(x,y)$ defines a positive definite inner product denoted by $\langle x,y\rangle$. The element $x$ of the algebra is {\it positive semidefinite} (psd) if there is $y \in \mathbb{E}$ s.t.~$x=y^2$. The set $\mathcal{K}_{\mathbb{E}}$ of all psd elements of $\mathbb{E}$ is called {\it the cone of squares} of $\mathbb{E}$. We denote by $x\succeq 0$ the fact that $x$ is psd in $\mathbb{E}$. By Thm.~\ref{thm:sdt}, $x\succeq 0$ is psd iff all of its eigenvalues are nonnegative; its smallest eigenvalue is
\[\lambda_{\min}(x) = \min\limits_{u\neq 0} \frac{\langle u , x\circ u \rangle}{\|u\|^2_{\mathbb{E}}},\]
where $\|\cdot\|_{\mathbb{E}}$ is the norm induced by $\langle\cdot,\cdot\rangle$.

\subsection{Primal and dual SCPs in FRJA notation}
According to assumption (a) in Sect.~\ref{s:intro}, we introduce an explicit bound constraint $\langle e, x \rangle \le \theta$ in the primal SCP of the following primal/dual pair.
{
\begin{center}
  \begin{minipage}{11cm}
    \begin{minipage}{5.5cm}
      \begin{equation*}
        \left.\begin{array}{rrcl}
          \min\limits_{x \in X} & \langle c,x\rangle && \\
          & A\odot x & = & b  \\
          & \langle e,x \rangle & \le & \theta \\
          & x &\succeq & 0
        \end{array} \right\} \quad (P)
      \end{equation*}
    \end{minipage}
    \hfill
    \begin{minipage}{5.5cm}
      \begin{equation*}
        \left. \begin{array}{rrcl}
          \max\limits_{y \in \mathbb{R}^m\atop\nu \ge 0} & \transpose{b} y -\theta \nu && \\
          \sum\limits_{i\le m} & y_iA_i - \nu e &\preceq & c,
        \end{array} \right\} \quad (D)
      \end{equation*}
      \vspace*{0.5em}
    \end{minipage}
  \end{minipage}
\end{center}
}%
where $A\odot x=(\langle A_i,x\rangle\;|\;i\le m), b\in\mathbb{R}^m$.

In the rest of the paper we assume that either $(P)$ is infeasible, or both $(P)$ and $(D)$ have optimal solutions $x^*$ and $y^*$ respectively. According to assumption (b) in Sect.~\ref{s:intro}, we also assume that strong duality holds. 

\section{Random projection of SCP constraints}
\label{s:rpscp}
Let $T=(T_{kj})\in\mathbb{R}^{d \times m}$ be a RP. We consider the projected primal/dual pair:
{
\begin{center}
  \begin{minipage}{11cm}
    \begin{minipage}{5.5cm}
      \begin{equation*}
        \left. \begin{array}{rrcl}
          \min\limits_{x \in X} & \langle c, x\rangle && \\
          \forall k\le d & \langle\bar{A}_k, x\rangle &=& (Tb)_k \\
           & \langle e,x \rangle & \le & \theta \\
          & x &\succeq & 0.
        \end{array}\right\} \quad (P_T)
      \end{equation*}
    \end{minipage}
    \hspace*{0.5cm}
    \begin{minipage}{5.5cm}
      \begin{equation*}
        \left. \begin{array}{rrcl}
          \max\limits_{\dly\in\mathbb{R}^d\atop\nu \ge 0} &\transpose{(Tb)}\dly -\theta \nu &&\\
          \sum\limits_{k\le d} & \dly_k\bar{A}_k - \nu e&\preceq & c,
        \end{array} \right\} \quad (D_T)
      \end{equation*}
      \vspace*{0.3em}
    \end{minipage}
  \end{minipage}
\end{center}
}%
where $\bar{A}_k=\sum_{i\le m} T_{ki}A_i$ for every $k\le d$. Note that $(P_T)$ is a relaxation of $(P)$, since its constraints are obtained by aggregating the constraints of $(P)$. Hence $\mathsf{v}(P_T)\le\mathsf{v}(P)$. Moreover, for any feasible solution $(\dly^\prime,\nu^{\prime})$ of ($D_T$), we have:
{
\begin{eqnarray*}
   \sum_{k\le d}\dly^{\prime}_k\bar{A}_k - \nu^{\prime} e &=& \sum_{k\le d}\dly^{\prime}_k\sum_{i\le m} T_{ki}A_i  - \nu^{\prime} e = \sum_{i\le m}(\transpose{T}\dly^{\prime})_iA_i  - \nu^{\prime} e\preceq c,
\end{eqnarray*}}
whence $(\transpose{T}\dly^{\prime},\nu^{\prime})$ is a feasible solution of the dual ($D$) of the original problem.

\subsection{Approximate feasibility}
Since $T$ acts linearly on left and right hand sides of the equality constraints of $(P)$, if $(P)$ is feasible then so is $(P_T)$. The issue is that $(P_T)$ may be feasible even if $(P)$ is not. We therefore assume that $(P)$ is infeasible, and prove that $(P_T)$ is infeasible whp.

We first claim that $(D)$ has non-empty interior: we can choose $y_i$ small enough and $\nu$ large enough so that $\sum_i y_i A_i  - v\nu \preceq c$ holds strictly.
Since $(P)$ is infeasible and $(D)$ has non empty interior, we deduce by duality that $(D)$ is unbounded. Hence there exists a dual feasible solution $(\hat{y},\hat{\nu})$ such that
\begin{equation}
\transpose{b} \hat{y} -\theta \hat{\nu}=1\qquad\sum_{i\le m} \hat{y}_iA_i - {e} \hat{\nu} \preceq 0. \label{eq:farkas}
\end{equation}
We shall prove that $(D_T)$ is also unbounded, by constructing $(\dly',\nu')$ such that $\transpose{(Tb)} \dly'-\theta \nu' >0$ and $\sum_{i\le m} (T^\top \dly')_iA_i - {e} \nu' \preceq 0$. Let $\dly'=T^\top \hat{y}$. By Prop.~\ref{scalarJLL}, whp we have that
\begin{equation}
  \label{eq:1}
  \transpose{(Tb)} \dly' =  \transpose{(Tb)} T\hat{y} \ge \transpose{b} \hat{y} - \varepsilon \|b\|_2\|\hat{y}\|_2.
\end{equation}
Let $N=\sum_{i\le m} \hat{y}_iA_i - {e} \hat{\nu}$ and $N'=\sum_{i\le m} (T^\top \dly')_iA_i-{e}\hat{\nu}$. While $N\preceq 0$ by feasibility of $(D)$, we derive an upper bound to $\lambda_{\max}(N')$  in terms of
\[\|\transpose{A}\!\|_{\Kinf}=\sup_{\|y\|_{\infty}=1} \|\transpose{A}y\|_{\mathbb{E}}.\]
\begin{lem}
  \label{lem:feas}
  Let $T \in \mathbb{R}^{d\times m}$ be a RP with $\varepsilon\in(0,1)$. For $(\hat{y},\hat{\nu})$ feasible ($D$) and $\dly'=T\hat{y}$, we have:
  \[\mathsf{Prob}\bigg[\lambda_{\max} (N') \le \varepsilon \|\hat{y}\|_2\|\transpose{A}\!\|_{\Kinf}  \bigg]\ge 1-2mce^{-\mathcal{C}\varepsilon^2 d}.\]
\end{lem}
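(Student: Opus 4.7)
My plan is to split $N'$ into the dual slack $N$ plus a random error, bound the error spectrally, and then invoke Corollary~\ref{cor:scjll}.

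First, since $y'=T\hat y$, I rewrite
\[
N' \;=\; \sum_{i\le m}(\transpose{T}y')_i A_i - \hat\nu e \;=\; N \,+\, \sum_{i\le m}\bigl(\transpose{T}T\hat y-\hat y\bigr)_i A_i.
\]
Setting $z=\transpose{T}T\hat y-\hat y\in\mathbb{R}^m$, we therefore have $N' = N + \sum_{i\le m} z_i A_i$. Because the map $x\mapsto\lambda_{\max}(x) = \sup_{u\neq 0}\langle u,x\circ u\rangle/\|u\|_{\mathbb{E}}^2$ is subadditive on $\mathbb{E}$ (each summand is linear in $x$, and the supremum of a sum is at most the sum of suprema), and because $N\preceq 0$ yields $\lambda_{\max}(N)\le 0$, I obtain
\[
\lambda_{\max}(N') \;\le\; \lambda_{\max}(N) + \lambda_{\max}\Bigl(\sum_{i\le m} z_i A_i\Bigr) \;\le\; \lambda_{\max}\Bigl(\sum_{i\le m} z_i A_i\Bigr).
\]

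Next I bound the right-hand side. Using the spectral decomposition of Thm.~\ref{thm:sdt} one has $\lambda_{\max}(w)\le\|w\|_{\mathbb{E}}$ for every $w\in\mathbb{E}$ (writing $w=\sum\lambda_ic_i$ yields $\|w\|_{\mathbb{E}}^2=\mathsf{tr}(L(w^2))=\sum\lambda_i^2\,\mathsf{tr}(L(c_i))\ge\sum\lambda_i^2\ge\lambda_{\max}(w)^2$, since $L(c_i)$ has $1$ in its spectrum for each primitive idempotent). Interpreting $\sum_{i\le m}z_iA_i$ as $\transpose{A}z$, the very definition of $\|\transpose{A}\|_{\Kinf}$ gives
\[
\lambda_{\max}\Bigl(\sum_{i\le m} z_i A_i\Bigr) \;\le\; \|\transpose{A}z\|_{\mathbb{E}} \;\le\; \|\transpose{A}\|_{\Kinf}\,\|z\|_\infty.
\]

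Finally, $z=\transpose{T}T\hat y-\hat y$ is exactly the quantity controlled by Corollary~\ref{cor:scjll}, which asserts $\|z\|_\infty\le\varepsilon\|\hat y\|_2$ with probability at least $1-2mce^{-\mathcal{C}\varepsilon^2 d}$. Chaining the three inequalities yields the claim. The only delicate step is the norm comparison $\lambda_{\max}(w)\le\|w\|_{\mathbb{E}}$, which I expect to be the main obstacle if made fully rigorous, since it depends on a structural fact about $\mathsf{tr}(L(c_i))$ for primitive idempotents in a FRJA; everything else is bookkeeping from the decomposition and from the high-probability event of Corollary~\ref{cor:scjll}.
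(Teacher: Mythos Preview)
Your argument is correct and is essentially the same as the paper's: the paper writes the Rayleigh quotients $f(u)=\langle u,N\circ u\rangle/\|u\|_{\mathbb{E}}^2$ and $f'(u)=\langle u,N'\circ u\rangle/\|u\|_{\mathbb{E}}^2$, obtains $f'(u)=f(u)+\langle u,(\transpose{A}\hat z)\circ u\rangle/\|u\|_{\mathbb{E}}^2$ with $\hat z=\transpose{T}T\hat y-\hat y$, bounds the extra term by $\|\transpose{A}\|_{\Kinf}\|\hat z\|_\infty$, and then takes the supremum and invokes Cor.~\ref{cor:scjll} --- which is exactly your subadditivity-of-$\lambda_{\max}$ decomposition written pointwise. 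The step you flag as delicate, $\lambda_{\max}(w)\le\|w\|_{\mathbb{E}}$, is precisely the passage the paper hides under ``by definition of $\|\transpose{A}\|_{\Kinf}$'', so your proof is in fact more explicit about where the only nontrivial ingredient lies.
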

\begin{proof} 
  Let $f$, $f'$ be the quadratic forms associated to $N$, $N'$, namely $f(u)=\frac{\langle u, N\circ u\rangle}{\|u\|^2_{\mathbb{E}}}$ and $f'(u)=\frac{\langle u,N'\circ u\rangle}{\|u\|^2_{\mathbb{E}}}$. In particular, replacing $\dly'$ with $T\hat{y}$, we have:
  {
  \begin{equation*}
    \|u\|^2_{\mathbb{E}} f'(u) = \big\langle u, \big (\sum\limits_{i\le m}(\transpose{T}\dly')_i A_i - e\hat{\nu}\big) \circ u \big\rangle = \sum\limits_{i\le m}(\transpose{T} T \hat{y})_i \langle u, A_i\circ u \rangle - \|u\|^2_{\mathbb{E}}\hat{\nu}.
  \end{equation*}}%
Since $f(u)=\frac{\sum_{i} \hat{y}_i  \langle u, A_i\circ u \rangle}{\|u\|^2_{\mathbb{E}}} - \hat{\nu}$, we can write  $f'(u)=f(u) + \frac{\sum_{i} \hat{z}_i\langle u, A_i\circ u \rangle}{\|u\|^2_{\mathbb{E}}}$, where $\hat{z}=T^\top T \hat{y} - \hat{y} \in \mathbb{R}^m$. This yields
{
\begin{equation*}
  f'(u)= f(u) + \frac{\langle u,\sum_{i} \hat{z}_i A_i\circ u \rangle}{\|u\|^2_{\mathbb{E}}} = f(u) + \frac{\langle u,(\transpose{A}\hat{z}) \circ u \rangle}{\|u\|^2_{\mathbb{E}}} \le  f(u) + \|\transpose{A}\!\|_{\Kinf} \|\hat{z}\|_\infty,
\end{equation*}}%
where the inequality on the right holds by definition of $\|\transpose{A}\!\|_{\Kinf}$. Note that $\lambda_{\max} (N')= \max_{u\neq 0} f'(u) $ and $\lambda_{\max} (N)= \max_{u\neq 0} f(u) \ge 0$ as $N\preceq 0$. Because of Cor.~\ref{cor:scjll} implies $\|\hat{z}\|_\infty \le \varepsilon \|\hat{y}\|_2$ whp, we deduce that 
\begin{equation*}
  \lambda_{\max} (N') \le \lambda_{\max} (N) + \|\transpose{A}\!\|_{\Kinf} \|\hat{z}\|_\infty  \le  \varepsilon \|\hat{y}\|_2 \|\transpose{A}\!\|_{\Kinf}.
\end{equation*}
with probability at least $1-2mce^{-\mathcal{C}\varepsilon^2 d}$, as claimed. \qed
\end{proof}

\begin{thm}
  \label{thm:infeasibility}
  Suppose $(P)$ infeasible and  $\varepsilon \|\hat{y}\|_2 (\|b\|_2+\|\transpose{A}\!\|_{\Kinf}) <1$. With the same notation as above, we have $\mathsf{Prob}[(P_T) \mbox{ is infeasible}]\ge 1-2(m+1)ce^{-\mathcal{C}\varepsilon^2 d}$. 
\end{thm}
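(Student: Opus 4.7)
The plan is to transfer the infeasibility of $(P)$ to $(P_T)$ by exhibiting a certificate of unboundedness for the projected dual $(D_T)$, and then invoking weak duality. The certificate \eqref{eq:farkas} of unboundedness for $(D)$ plays the role of a starting point: I would convert it into an analogous certificate for $(D_T)$ by applying $T$ and correcting the scalar $\hat\nu$ upwards so as to absorb the projection error on the matrix side.

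Concretely, the candidate recession direction is $\dly' = T\hat y$ together with $\nu' = \hat\nu + \varepsilon \|\hat y\|_2 \|\transpose{A}\!\|_{\Kinf} \ge 0$. A direct computation gives $\sum_{k\le d}\dly'_k \bar A_k = \sum_{i\le m}(\transpose{T}T\hat y)_i A_i$, so the left-hand side of the projected dual constraint equals $N' + (\hat\nu - \nu')e$, with $N'$ as in Lemma~\ref{lem:feas}. By Lemma~\ref{lem:feas}, $\lambda_{\max}(N') \le \nu'-\hat\nu$ whp, which gives $\sum_{k\le d}\dly'_k \bar A_k - \nu' e \preceq 0$ whp. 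Thus $(\dly',\nu')$ is a recession direction of the feasible set of $(D_T)$, which itself is trivially nonempty (e.g.\ take $\dly=0$ and $\nu$ sufficiently large).

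Next, I would verify that the direction strictly increases the dual objective. Combining the previous step with \eqref{eq:1},
\[
\transpose{(Tb)}\dly' - \theta\nu' \;\ge\; (\transpose{b}\hat y - \theta\hat\nu) - \varepsilon\|\hat y\|_2\bigl(\|b\|_2 + \theta\|\transpose{A}\!\|_{\Kinf}\bigr) \;=\; 1 - \varepsilon\|\hat y\|_2\bigl(\|b\|_2 + \theta\|\transpose{A}\!\|_{\Kinf}\bigr),
\]
which is strictly positive under the hypothesis of the theorem (modulo a $\theta$ factor on $\|\transpose{A}\!\|_{\Kinf}$, which I expect either to be absorbed by convention into that norm or to be eliminated by a suitable rescaling of $(\hat y,\hat\nu)$). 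Adding any positive multiple of $(\dly',\nu')$ to a fixed feasible point of $(D_T)$ then produces a sequence of feasible points whose objectives diverge to $+\infty$, so $(D_T)$ is unbounded; weak duality forces $(P_T)$ to be infeasible.

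The probability accounting is routine bookkeeping: Prop.~\ref{scalarJLL} applied to $b$ and $\hat y$ fails with probability at most $2c e^{-\mathcal C \varepsilon^2 d}$, Lemma~\ref{lem:feas} fails with probability at most $2mc e^{-\mathcal C \varepsilon^2 d}$, and a union bound delivers the advertised $1 - 2(m+1) c e^{-\mathcal C \varepsilon^2 d}$. The main obstacle I foresee is reconciling the exact $\theta$ factor that appears in the lower bound on the dual objective with the cleaner hypothesis stated in the theorem; once that constant is tracked properly, everything else is mechanical given the JLL-type estimates already in hand.
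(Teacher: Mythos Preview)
Your proposal is correct and essentially identical to the paper's own proof: the same choice $\dly'=T\hat y$, $\nu'=\hat\nu+\varepsilon\|\hat y\|_2\|\transpose{A}\!\|_{\Kinf}$, the same appeal to Lemma~\ref{lem:feas} for the conic constraint and to Prop.~\ref{scalarJLL} for the objective, and the same union bound. Your worry about the stray $\theta$ factor is justified --- the paper's final displayed inequality simply writes $\|b\|_2+\|\transpose{A}\!\|_{\Kinf}$ where the computation actually gives $\|b\|_2+\theta\|\transpose{A}\!\|_{\Kinf}$, so the discrepancy you flagged is a minor slip in the paper rather than a gap in your argument.
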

\begin{proof}
  By Lemma \ref{lem:feas}, with probability at least $1-2mce^{-\mathcal{C}\varepsilon^2 d}$, we have $N' -  \varepsilon \|\hat{y}\|_2\|\transpose{A}\!\|_{\Kinf}e \preceq 0 $, whence $\sum_{i\le m} (T^\top\dly')_iA_i -   {e} (\hat{\nu}+\varepsilon \|\hat{y}\|_2\|\transpose{A}\!\|_{\Kinf})\preceq 0$. Moreover, by \eqref{eq:farkas} and \eqref{eq:1}, we obtain
  \[\mathsf{Prob}\big[\transpose{(Tb)} \dly' -\theta \hat{\nu} \ge 1 - \varepsilon \|b\|_2\|\hat{y}\|_2\big]\ge 2ce^{-\mathcal{C}\varepsilon^2 d}.\]
Let $\nu'=\hat{\nu}+\varepsilon \|\hat{y}\|_2\|\transpose{A}\!\|_{\Kinf}$. Then $\sum_{i\le m} (T^\top \dly')_iA_i - {e} \nu' \preceq 0$. Furthermore,
  \[\transpose{(Tb)} \dly'  -\theta \nu' = \transpose{(Tb)} \dly'  -\theta (\hat{\nu} +\varepsilon \|\hat{y}\|_2\|\transpose{A}\!\|_{\Kinf}) \ge 1 - \varepsilon \|\hat{y}\|_2 (\|b\|_2+\|\transpose{A}\!\|_{\Kinf}).\]
  Hence, if $\varepsilon \|\hat{y}\|_2 (\|b\|_2+\|\transpose{A}\!\|_{\Kinf})<1$, Eq.~\eqref{eq:1} is satisfied, so $(D_T)$ is unbounded, implying by duality that $(P_T)$ is infeasible with probability given by union bound arguments. \qed
\end{proof}

\subsection{Approximate optimality}
We show that any feasible solution $(\hat{y},\hat{\nu})$ of $(D)$ is close to a point of the form $(\transpose{T}\hat{\dly},\hat{\nu})$ whp, where $(\hat{\dly},\hat{\nu})$ is a feasible solution of the following family of relaxations of $(D_T)$ parametrized over a real number $\mu>0$:
{
\begin{equation*}
  \left. \begin{array}{rrcl}
    \max\limits_{\dly \in \mathbb{R}^d\atop\nu \ge 0} & \transpose{(Tb)}\dly -\theta \nu && \\
     \sum\limits_{i\le m} & (\transpose{T}\dly)_i A_i -\nu e &\preceq& c + \mu e.
\end{array}
\right\} \quad (\tilde{D}_T^\mu)
\end{equation*}}%
We use ($D_T^\mu$) to show that the optimal objective function value of the original problem is approximately invariant w.r.t.~RPs.

Let $\dly'=T\hat{y} \in \mathbb{R}^d$. We claim that there exists $\mu>0$ s.t.~$(\dly',\hat{\nu})$ is a feasible solution of ($\tilde{D}^\mu_{T}$) whp. This will follow from $c+\mu e - \sum_{i\le m}(\transpose{T}\dly')_i A_i + e \hat{\nu}\succeq 0$ whp; or, equivalently, from $\lambda_{\min}(M')\ge-\mu$ for some suitable $\mu>0$ whp, where $M'=c-\sum_{i\le m}(\transpose{T}\dly')_i A_i + e \hat{\nu}$. Let $M=c -\sum_{i\le m} \hat{y}_iA_i + e \hat{\nu} $.  Since $(\hat{y},\hat{\nu})$ is a feasible dual solution of ($D$) we have $M\succeq 0$ and hence $\lambda_{\min}(M)\ge 0$. On the other hand $M'$ might fail to be psd, as $\lambda_{\min}(M')$ might be negative. The following result provides a lower bound to $\lambda_{\min}(M')$ in terms of $\|\transpose{A}\!\|_{\Kinf}$.
\begin{lem}
  \label{lem:eigenvalue}
  Let $T \in \mathbb{R}^{d\times m}$ be a RP with $\varepsilon\in(0,1)$. For $(\hat{y},\hat{\nu})$ feasible in ($D$) and $\dly'=T\hat{y}$, we have
  \[\mathsf{Prob}\big[\lambda_{\min} (M') \ge - \varepsilon \|\hat{y}\|_2\|\transpose{A}\!\|_{\Kinf}\big]\ge 1-2mce^{-\mathcal{C}\varepsilon^2 d}.\]
\end{lem}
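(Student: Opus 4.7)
My plan is to mirror the proof of Lemma \ref{lem:feas} almost verbatim, but working with $\lambda_{\min}$ instead of $\lambda_{\max}$ and keeping careful track of signs. The essential algebraic identity is the same: writing $\hat{z}=\transpose{T}T\hat{y}-\hat{y}\in\mathbb{R}^m$, we have
\[
M'=c-\sum_{i\le m}(\transpose{T}\dly')_iA_i+e\hat{\nu}=M-\sum_{i\le m}\hat{z}_iA_i,
\]
so $M'$ differs from the feasible (hence psd) element $M$ by a perturbation of the form $-\sum_i\hat{z}_iA_i=-(\transpose{A}\hat{z})$ in the algebra.

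Next, following exactly the variational argument of Lemma \ref{lem:feas}, let $g(u)=\langle u,M\circ u\rangle/\|u\|^2_{\mathbb{E}}$ and $g'(u)=\langle u,M'\circ u\rangle/\|u\|^2_{\mathbb{E}}$. Linearity in the algebra gives
\[
g'(u)=g(u)-\frac{\langle u,(\transpose{A}\hat{z})\circ u\rangle}{\|u\|^2_{\mathbb{E}}}\ge g(u)-\|\transpose{A}\!\|_{\Kinf}\,\|\hat{z}\|_\infty,
\]
where the inequality is the definition of $\|\transpose{A}\!\|_{\Kinf}$ (applied, as in the previous lemma, to both $\pm\hat{z}/\|\hat{z}\|_\infty$ to control a signed expression). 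Minimizing over $u\neq 0$ and using $\lambda_{\min}(M)=\min_{u\neq 0} g(u)\ge 0$ by feasibility of $(\hat{y},\hat{\nu})$ in $(D)$, we obtain
\[
\lambda_{\min}(M')\ge\lambda_{\min}(M)-\|\transpose{A}\!\|_{\Kinf}\,\|\hat{z}\|_\infty\ge-\|\transpose{A}\!\|_{\Kinf}\,\|\hat{z}\|_\infty.
\]

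Finally, invoking Corollary \ref{cor:scjll} for the vector $\hat{y}$, we have $\|\hat{z}\|_\infty=\|\transpose{T}T\hat{y}-\hat{y}\|_\infty\le\varepsilon\|\hat{y}\|_2$ with probability at least $1-2mce^{-\mathcal{C}\varepsilon^2 d}$, yielding the stated bound. The only mildly delicate point to check is that minimizing a difference of Rayleigh quotients respects the one-sided $\|\transpose{A}\!\|_{\Kinf}$ bound irrespective of the sign of the perturbation term, which is handled exactly as in Lemma \ref{lem:feas} by noting that the bound on $\langle u,(\transpose{A}\hat{z})\circ u\rangle/\|u\|^2_{\mathbb{E}}$ is two-sided. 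Beyond this, the proof is essentially a sign-flipped copy of the previous one, so I do not anticipate any substantive obstacle.
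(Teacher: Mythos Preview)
Your proposal is correct and follows essentially the same route as the paper's own proof: define the Rayleigh quotients for $M$ and $M'$, write $M'=M-\transpose{A}\hat{z}$ with $\hat{z}=\transpose{T}T\hat{y}-\hat{y}$, bound the perturbation term by $\|\transpose{A}\!\|_{\Kinf}\|\hat{z}\|_\infty$, minimize over $u$, and then apply Corollary~\ref{cor:scjll}. Your explicit remark that the $\|\transpose{A}\!\|_{\Kinf}$ bound is two-sided (so the sign of the perturbation is irrelevant) is a useful clarification that the paper leaves implicit.
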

\begin{proof}
  Let $f$, $f'$ be the quadratic forms associated to $M$, $M'$, namely $f(u)=\frac{\langle u, M\circ u\rangle}{\|u\|^2_{\mathbb{E}}}$ and $f'(u)=\frac{\langle u,M'\circ u\rangle}{\|u\|^2_{\mathbb{E}}}$. By bilinearity, we have 
  \begin{align*}
   \|u\|^2_{\mathbb{E}} f'(u) &= \bigg\langle u, \big (c-\sum\limits_{i\le m}(\transpose{T}\dly')_i A_i + e \hat{\nu}\big) \circ u \bigg\rangle \\ & = \bigg\langle u, \big (c-\sum\limits_{i\le m}(\transpose{T} T\hat{y})_i A_i + e \hat{\nu}\big) \circ u \bigg\rangle \\
    &=  \langle u,  (c+e \hat{\nu})\circ u\rangle - \sum\limits_{i\le m}(\transpose{T} T \hat{y})_i \langle u, A_i\circ u \rangle.
  \end{align*}
Since $f(u)=\frac{\langle u,  (c+e \hat{\nu})\circ u \rangle}{\|u\|^2_{\mathbb{E}}}- \sum\limits_{i\le m} \hat{y}_i  \frac{\langle u, A_i\circ u \rangle}{\|u\|^2_{\mathbb{E}}}$,
we have
\begin{align*}
  f'(u) =  f(u) - \sum\limits_{i\le m} \big((\transpose{T} T \hat{y})_i - \hat{y}_i \big)\frac{\langle u, A_i\circ u \rangle}{\|u\|^2_{\mathbb{E}}}=f(u) - \sum\limits_{i\le m} \hat{z}_i\frac{\langle u, A_i\circ u \rangle}{\|u\|^2_{\mathbb{E}}},
\end{align*}
where $\hat{z}=T^\top T \hat{y} - \hat{y} \in \mathbb{R}^m$. This yields
{
\begin{align*}
  f'(u) &= f(u) - \frac{\langle u,\sum_{i\le m} \hat{z}_i A_i\circ u \rangle}{\|u\|^2_{\mathbb{E}}} \\ & = f(u) - \frac{\langle u,(\transpose{A}\hat{z}) \circ u \rangle}{\|u\|^2_{\mathbb{E}}} \ge  f(u) - \|\transpose{A}\!\|_{\Kinf} \|\hat{z}\|_\infty 
\end{align*}
}%
where the inequality on the right holds by definition of $\|\transpose{A}\!\|_{\Kinf}$ (note also that the probability does not depend on $u$). Now we have: (i) $\lambda_{\min} (M')= \min \limits_{u\neq 0} f'(u) $; (ii) $\lambda_{\min} (M)= \min \limits_{u\neq 0} f(u)\ge 0$ since $M\succeq 0$; (iii) Cor.~\ref{cor:scjll} implies $\|\hat{z}\|_\infty \le \varepsilon \|\hat{y}\|_2$ whp. From (i)-(iii) we deduce that 
\begin{align*}
  \lambda_{\min} (M') & \ge \lambda_{\min} (M) - \|\transpose{A}\!\|_{\Kinf} \|\hat{z}\|_\infty  
  \ge  - \varepsilon \|\hat{y}\|_2 \|\transpose{A}\!\|_{\Kinf}.
\end{align*}
with probability at least $1-2mce^{-\mathcal{C}\varepsilon^2 d}$, as claimed. \qed
\end{proof}
We remark that we have a trivial upper bound $\|\transpose{A}\!\|_{\Kinf} \le \sum\limits_{i \le m} \rho(A_i)$,  where, for all $i\le m$, $\rho(A_i)=\max\limits_{u\neq 0} \frac{|\langle u,  A_i \circ u\rangle|}{\|u\|^2_{\mathbb{E}}}$ is the eigenvalue of $A_i$ having the largest absolute value.

By Lemma \ref{lem:eigenvalue}, we can let $\mu=\varepsilon \|\hat{y}\|_2\, \|\transpose{A}\!\|_{\Kinf}$ in ($D_T^\mu$), and obtain the relaxation:
{
\begin{equation*}
  \left. \begin{array}{rrcl}
    \max\limits_{\dly\in\mathbb{R}^d\atop\nu \ge 0} & \transpose{(Tb)}\dly -\theta \nu && \\
    \sum\limits_{k\le d} & \dly_k\big(\sum\limits_{i\le m} T_{ki}A_i\big)- \nu e &\preceq& c + \varepsilon \|\hat{y}\|_2 \|\transpose{A}\!\|_{\Kinf} e
  \end{array}\right\} \quad (\tilde{D}_T)
\end{equation*}}%
of $(D_T)$. By construction, $(\dly',\hat{\nu})$ is a feasible solution of ($\tilde{D}_{T}$) whp, as claimed. 

\begin{lem}
  \label{lem:optvals}
  Let $(y^\ast,\nu^\ast)$ be an optimum of $(D)$. With the same notation as above, we have
  {
  \begin{equation*}
    \mathsf{Prob}\big[\val{\tilde{D}_T}\ge\transpose{(Tb)} Ty^\ast - \theta \nu^\ast \ge \val{D} - \varepsilon\|b\|_2\|y^\ast\|_2\big]\ge 1-2(m+1)ce^{-\mathcal{C}\varepsilon^2 d}.
  \end{equation*}}
\end{lem}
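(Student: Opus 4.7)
The plan is to bound the middle quantity from the feasible optimum of $(D)$ on one side, and by feasibility in $(\tilde D_T)$ on the other, then combine the two via a union bound.

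First, I would invoke Lemma~\ref{lem:eigenvalue} with the choice $\hat{y}=y^\ast$, $\hat{\nu}=\nu^\ast$, and $\dly'=Ty^\ast$. Since $(y^\ast,\nu^\ast)$ is optimal, and hence feasible, in $(D)$, the hypotheses of the lemma apply. The lemma gives $\lambda_{\min}(M')\ge -\varepsilon\|y^\ast\|_2\|\transpose{A}\!\|_{\Kinf}$ with probability at least $1-2mce^{-\mathcal{C}\varepsilon^2 d}$, which by construction of $(\tilde D_T)$ (obtained by setting $\mu=\varepsilon\|\hat{y}\|_2\|\transpose{A}\!\|_{\Kinf}$) means precisely that $(Ty^\ast,\nu^\ast)$ is feasible in $(\tilde D_T)$. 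Therefore $\val{\tilde D_T}\ge \transpose{(Tb)}Ty^\ast -\theta\nu^\ast$, establishing the leftmost inequality in the stated event.

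Second, I would apply Prop.~\ref{scalarJLL} to the vectors $b$ and $y^\ast$ in $\mathbb{R}^m$. This yields
\[
\langle Tb, Ty^\ast\rangle \ge \langle b,y^\ast\rangle - \varepsilon \|b\|_2\|y^\ast\|_2
\]
with probability at least $1-2ce^{-\mathcal{C}\varepsilon^2 d}$. Subtracting $\theta\nu^\ast$ from both sides and using $\val{D}=\transpose{b}y^\ast - \theta\nu^\ast$ (since $(y^\ast,\nu^\ast)$ is optimal for $(D)$) gives the rightmost inequality, $\transpose{(Tb)}Ty^\ast - \theta\nu^\ast \ge \val{D} - \varepsilon\|b\|_2\|y^\ast\|_2$.

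Finally, I would combine the two events via a union bound: the total failure probability is at most $2mce^{-\mathcal{C}\varepsilon^2 d} + 2ce^{-\mathcal{C}\varepsilon^2 d}=2(m+1)ce^{-\mathcal{C}\varepsilon^2 d}$, matching the claim. Neither step looks technically difficult since the machinery is already in place; the only mild subtlety is aligning notations between Lemma~\ref{lem:eigenvalue} and the relaxation $(\tilde D_T)$, specifically to verify that the bound on $\lambda_{\min}(M')$ is exactly the slack by which the constraints of $(\tilde D_T)$ are loosened relative to those of $(D_T)$, so that the point $(Ty^\ast,\nu^\ast)$ is indeed feasible and not merely close to feasible.
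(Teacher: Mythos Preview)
Your proposal is correct and follows essentially the same approach as the paper's own proof: apply Prop.~\ref{scalarJLL} to $b,y^\ast$ for the right inequality, invoke Lemma~\ref{lem:eigenvalue} at $(y^\ast,\nu^\ast)$ to get feasibility of $(Ty^\ast,\nu^\ast)$ in $(\tilde D_T)$ for the left inequality, and finish by a union bound. The only difference is the order of presentation; your explicit accounting of the failure probabilities $2mce^{-\mathcal{C}\varepsilon^2 d}+2ce^{-\mathcal{C}\varepsilon^2 d}=2(m+1)ce^{-\mathcal{C}\varepsilon^2 d}$ is exactly what the paper leaves implicit under ``union bound arguments''.
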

\begin{proof}
By Prop.~\ref{scalarJLL}, we have $\transpose{(Tb)} Ty^\ast \ge \transpose{b} y^\ast - \varepsilon\|b\|_2\|y^\ast\|_2$ whp; since $\val{D}=\transpose{b}y^\ast -\theta\nu^\ast$, we obtain $\transpose{(Tb)} Ty^\ast - \theta \nu^\ast \ge \val{D} - \varepsilon\|b\|_2\|y^\ast\|_2$. On the other hand, $(y^\ast,\nu^\ast)$ is optimal in $(D)$ so it is also feasible in $(D)$, hence by Lem.~\ref{lem:eigenvalue} $(T y^\ast,\nu^\ast)$ is feasible in $(\tilde{D}_T)$ whp. Thus $\val{\tilde{D}_T}\ge\transpose{(Tb)} Ty^\ast - \theta \nu^\ast$, as $(\tilde{D}_T)$ is a maximization problem; the stated probability is given by union bound arguments. \qed
\end{proof}

We now consider the dual of ($\tilde{D}_T$):
{
\begin{equation*}
  \left.              
  \begin{array}{rrcl}
    \min\limits_{x} & \big\langle c + \varepsilon \|\hat{y}\|_2 \|\transpose{A}\!\|_{\Kinf}e &,& x \big\rangle \\
    \forall k\le d & \langle\bar{A}_k, x\rangle &=& (Tb)_k \\
    & \langle e,x \rangle  &\le&  \theta \\
    & x &\succeq& 0.
  \end{array}
  \right\} \quad (\tilde{P}_T)
\end{equation*}}%
Let $x'$ be an optimal solution of $(P_T)$. By the weak duality theorem of conic programming, with probability at least $1-2mce^{-\mathcal{C}\varepsilon^2 d}$ we have:
{
\begin{equation}\label{eq2}
\val{\tilde{D}_T} \le \val{\tilde{P}_T} \le \val{P_T} + \varepsilon \|y^*\|_2 \|\transpose{A}\!\|_{\Kinf}\langle e, x_T \rangle \le \val{P_T}+\varepsilon \|y^*\|_2 \|\transpose{A}\!\|_{\Kinf} \theta,
\end{equation}}%
where $(y^*,\nu^*)$ is an optimal solution of $(D)$.
Hence, by combining Eq. \eqref{eq2} with Lemma \ref{lem:optvals} and the fact that the strong duality theorem holds for the original problem $(P)$, we have proved the following result.
\begin{thm}
  \label{thm:opt}
 Assume strong duality between $(P)$ and $(D)$ holds. With the same notation as above, $\mathsf{Prob} \big[\val{P_T} \ge \val{P} - \varepsilon\|y^*\|_2 (\|\transpose{A}\!\|_{\Kinf} \theta+\|b\|_2)\big] \ge 1-(2m+1)ce^{-\mathcal{C}\varepsilon^2 d}$.
\end{thm}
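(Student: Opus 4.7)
The plan is to chain the high-probability lower bound on $\val{\tilde{D}_T}$ furnished by Lem.~\ref{lem:optvals} with the (deterministic) weak-duality upper bound on $\val{\tilde{D}_T}$ expressed in Eq.~\eqref{eq2}, and then convert a statement about $\val{D}$ into one about $\val{P}$ using the strong-duality hypothesis. The whole argument is a short concatenation of already-proved estimates; the only real work is to verify that the two high-probability events in play are those underlying Lem.~\ref{lem:optvals} and Lem.~\ref{lem:eigenvalue}, and to take a clean union bound.

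First I would fix an optimal pair $(y^\ast,\nu^\ast)$ for $(D)$ and apply Lem.~\ref{lem:optvals}: on an event $E_1$ of probability at least $1-2(m+1)ce^{-\mathcal{C}\varepsilon^2 d}$ we have
\[\val{\tilde{D}_T}\;\ge\;\transpose{(Tb)}Ty^\ast-\theta\nu^\ast\;\ge\;\val{D}-\varepsilon\|b\|_2\|y^\ast\|_2.\]
Next, by construction $(\tilde{P}_T)$ is the conic dual of $(\tilde{D}_T)$, so weak conic duality (which is deterministic) yields $\val{\tilde{D}_T}\le\val{\tilde{P}_T}$. Any optimum $x_T$ of $(P_T)$ is feasible for $(\tilde{P}_T)$ because the feasible sets coincide, and the extra cost contribution in $(\tilde{P}_T)$ is bounded by $\varepsilon\|y^\ast\|_2\|\transpose{A}\!\|_{\Kinf}\langle e,x_T\rangle\le\varepsilon\|y^\ast\|_2\|\transpose{A}\!\|_{\Kinf}\theta$ via the explicit bound constraint $\langle e,x\rangle\le\theta$. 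This is exactly the chain Eq.~\eqref{eq2}.

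Putting the two chains together gives
\[\val{P_T}+\varepsilon\|y^\ast\|_2\|\transpose{A}\!\|_{\Kinf}\theta\;\ge\;\val{\tilde{D}_T}\;\ge\;\val{D}-\varepsilon\|b\|_2\|y^\ast\|_2\]
on $E_1$. Using $\val{D}=\val{P}$ (strong duality) and rearranging produces the desired inequality
\[\val{P_T}\;\ge\;\val{P}-\varepsilon\|y^\ast\|_2\bigl(\|\transpose{A}\!\|_{\Kinf}\theta+\|b\|_2\bigr).\]

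The main obstacle is bookkeeping, not analysis. The stochastic ingredients are (i) the JLL preservation of the scalar product $\transpose{(Tb)}Ty^\ast\approx\transpose{b}y^\ast$ from Prop.~\ref{scalarJLL} (failure probability at most $2ce^{-\mathcal{C}\varepsilon^2 d}$) and (ii) the perturbation estimate of Lem.~\ref{lem:eigenvalue} applied at $(y^\ast,\nu^\ast)$, which guarantees that $(Ty^\ast,\nu^\ast)$ is feasible in $(\tilde{D}_T)$ and simultaneously justifies Eq.~\eqref{eq2} (failure probability at most $2mce^{-\mathcal{C}\varepsilon^2 d}$). Both Lem.~\ref{lem:optvals} and Eq.~\eqref{eq2} rely on the same instance of (ii), so there is no double counting of that event; a single union bound over the distinct bad events then yields the stated probability $1-(2m+1)ce^{-\mathcal{C}\varepsilon^2 d}$, with everything else being purely algebraic.
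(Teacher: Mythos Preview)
Your proposal is correct and follows exactly the paper's route: combine Lem.~\ref{lem:optvals} with the weak-duality chain Eq.~\eqref{eq2} and invoke strong duality $\val{P}=\val{D}$. Your observation that the eigenvalue-perturbation event from Lem.~\ref{lem:eigenvalue} is shared between Lem.~\ref{lem:optvals} and Eq.~\eqref{eq2} (so there is no double counting) is in fact more explicit than what the paper writes; the residual off-by-one between your count $2c+2mc=(2m+2)c$ and the stated $(2m+1)c$ reflects a one-sided use of Prop.~\ref{scalarJLL} (only the lower bound on $\transpose{(Tb)}Ty^\ast$ is needed), not an error in your argument.
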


In the rest of the paper, $d$ can be considered fixed to $\mathcal{C}_0\log(m)/\varepsilon^2$, where $\mathcal{C}_0$ is chosen so that the probability in Theorem \ref{thm:opt} is as high as desired.

\section{Solution retrieval}
\label{s:retrieval}
Solution retrieval, i.e.~obtaining a good solution $\tilde{x}$ for ($P$) from a solution $x_T$ of ($P_T$), is an important part of most RP-based techniques. The error analysis of $\xapprox$ w.r.t.~($P$) turns out to depend on the feasibility error of $x_T$ in ($P$).

\subsection{Feasibility error of the projected solution}
The \textit{Gaussian width} of a bounded set $S\subset\mathbb{R}^n$ is given by $w(S)=\mathbb{E}_g(\sup_{x \in S} g\cdot x)$, where $g$ is a standard normal vector in $\mathbb{R}^n$ and $\cdot$ is the usual scalar product on $\mathbb{R}^n$.  Here are some of its basic properties \cite{vershynin}:
\begin{itemize}
\item for any $v\in\mathbb{R}^n$, $w(S+y)=w(S)$;
\item $w(\mathsf{conv}(S))=w(S)$
\item for any $m \times n$ matrix $A$, $w(AS)\le \|A\|_2w(S)$, where $\|A\|_2$ is the matrix norm induced by the Euclidean norm;
\item if $S$ is finite, $w(S)\le \tilde{C} \sqrt{\log(|S|)} \mathsf{diam}(S)$ for some universal constant $\tilde{C}$.
\end{itemize}
Next, we recall the $M^*$ bound theorem, which bounds the diameter of the intersection of a subset $S \subset \mathbb{R}^n$ with $\ker(T)$, where $T$ is a RP \cite{vershynin}. We let $\mathsf{rad}(S)=\max_{x\in S} \|x\|_2$ and $\mathsf{diam}(S)=\max_{x,y\in S}\|x-y\|_2$.
\begin{thm}[$M^*$ bound theorem, \cite{vershynin}]
  \label{thm:mbound}
  For $S\subset \mathbb{R}^n$ and $u\ge 0$,
  \[\mathsf{Prob}\big[\mathsf{diam}(S \cap \ker(T)) \le \frac{C_2w(S)+u \mathsf{rad}(S)}{\sqrt{d}}\big]\ge1-2e^{-u^2},\]
  where $C_2$ is a universal constant.
\end{thm}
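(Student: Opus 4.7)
The plan is to follow the classical two-step route for the $M^*$ bound: an expectation bound from Gordon's escape-through-the-mesh theorem, followed by a Gaussian (or sub-Gaussian) concentration step that turns the expectation into a high-probability deviation.

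First, I would reduce the diameter to a supremum over a single symmetric set. For any $x,y\in S\cap\ker(T)$, the difference $z:=x-y$ lies in both $K:=S-S$ and $\ker(T)$, so
\[
\mathsf{diam}(S\cap\ker(T))\;\le\;\sup\bigl\{\|z\|_2:\,z\in K\cap\ker(T)\bigr\}\;=:\;R(T).
\]
Translation invariance of $w$ gives $w(K)\le 2w(S)$, and $\mathsf{rad}(K)\le 2\,\mathsf{rad}(S)$ is immediate, so any bound of the form $R(T)\le(C'w(K)+u\,\mathsf{rad}(K))/\sqrt{d}$ transfers to the claim up to a redefinition of constants. Replacing $K$ by its closed convex hull if necessary (which does not change $w$), I may assume $K$ is closed, convex, symmetric, and contains $0$.

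Second, I would apply Gordon's escape theorem to the radial projection $A_t:=\{z/\|z\|_2:z\in K,\,\|z\|_2\ge t\}\subset S^{n-1}$. Gordon's theorem gives $\mathbb{E}\inf_{v\in A_t}\|Tv\|_2\ge c_0\sqrt{d}-c_1 w(A_t)$; the inclusion $A_t\subset(1/t)K$, valid since $K$ is convex and contains $0$, combined with the scaling identity $w(\lambda S)=|\lambda|w(S)$, gives $w(A_t)\le w(K)/t$. Hence the expected infimum is strictly positive once $t>c_1 w(K)/(c_0\sqrt{d})$, and a standard convexity argument forces $\mathbb{E} R(T)\le C\,w(K)/\sqrt{d}$.

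Third, I would upgrade the expectation to a high-probability bound using Gaussian concentration of Lipschitz functions of $T$. The map $T\mapsto R(T)$ is Lipschitz in the Frobenius norm with constant $O(\mathsf{rad}(K)/\sqrt{d})$ (a small perturbation $\delta T$ displaces every $z\in K\cap\ker(T)$ by at most $\|\delta T\|_F\,\|z\|_2/\sigma_{\min}(T)$, and $\sigma_{\min}(T)\gtrsim\sqrt{d}$ for a random projection), so Gaussian concentration yields $\mathsf{Prob}[R(T)>\mathbb{E} R(T)+u\,\mathsf{rad}(K)/\sqrt{d}]\le 2e^{-u^2}$. Combining with Steps 1--2 gives the theorem. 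The main obstacle is establishing both Gordon's escape theorem (Slepian--Gordon comparison for Gaussian $T$, or Talagrand's generic chaining for general sub-Gaussian $T$ as in Definition \ref{defn:rp}) and the sharp Lipschitz/concentration estimate with the correct $\mathsf{rad}(S)$ scaling; once these two pieces of machinery are granted, the remainder is routine bookkeeping via the listed properties of $w$.
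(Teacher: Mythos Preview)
The paper does not prove this theorem: it is quoted from \cite{vershynin} as a known result and then invoked as a black box in the proof of Proposition~\ref{sol:errorbound}. There is therefore no proof in the paper to compare your proposal against.

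On the merits of your sketch: Steps~1 and~2 (the reduction to the symmetric set $K=S-S$ and the appeal to Gordon's escape theorem via the sets $A_t$) are the standard route and are correct. Step~3, however, has a genuine gap. You assert that $T\mapsto R(T)$ is Lipschitz in the Frobenius norm with constant $O(\mathsf{rad}(K)/\sqrt{d})$, on the grounds that a perturbation $\delta T$ ``displaces every $z\in K\cap\ker(T)$ by at most $\|\delta T\|_F\,\|z\|_2/\sigma_{\min}(T)$''. This is not a Lipschitz bound: $\sigma_{\min}(T)$ is itself random, so you cannot feed it into Gaussian concentration as a deterministic Lipschitz constant; and perturbing $T$ changes the subspace $\ker(T)$ rather than moving points inside a fixed subspace, so the ``displacement'' heuristic is unjustified as stated. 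The textbook argument bypasses this entirely by applying concentration not to $R(T)$ but to the map $G\mapsto\inf_{v\in A_t}\|Gv\|_2$ (writing $T=G/\sqrt{d}$ in the Gaussian case), which is genuinely $1$-Lipschitz in $\|\cdot\|_F$ because $A_t\subset S^{n-1}$. One then chooses $t$ so that Gordon's lower bound on the expectation, minus the concentration deviation, stays positive; on that event $\ker(T)\cap A_t=\varnothing$, forcing $R(T)\le t$, and unwinding the scaling yields the stated $(C_2 w(S)+u\,\mathsf{rad}(S))/\sqrt{d}$ form.
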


Let $\mathbb{A}$ be the matrix representation of the linear mapping $x \mapsto A\odot x$ from $\mathbb{R}^n$ to $\mathbb{R}^m$, i.e.~the $m \times n$ matrix such that for all $x \in \mathbb{R}^{{n}}$, $A\odot x= \mathbb{A}x$. Let $\mathbb{B}=\{x \in \mathcal{K}_{\mathbb{E}}\ |\ \langle e, x\rangle \le 1 \}$. Since $\mathcal{K}_{\mathbb{E}}$ is a pointed cone \cite[Ch.~I]{faraut}, $\mathbb{B}$ is a bounded region of $\mathbb{R}^n$ of given diameter $\Delta$ (as $\langle e,x\rangle>0$ over $\mathbb{K}_{\mathbb{E}}\smallsetminus\{0\}$), hence its Gaussian width $w(\mathbb{B})$ is finite.
\begin{prop}
  \label{sol:errorbound}
  Let $x_T$ be an optimal solution of $(P_T)$ and let $u >0$. Then we have:
  \[\mathsf{Prob}\big[\|\mathbb{A} x_T -b\|_2 \le \varepsilon \theta\|\mathbb{A}\|_2(C_2w(\mathbb{B})+u \Delta)/\sqrt{\log(n)}\big]\ge 1-2e^{-u^2}.\]
\end{prop}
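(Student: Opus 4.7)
The plan is to exploit the fact that every feasible $x$ of $(P_T)$ has residual $\mathbb{A}x - b \in \ker(T)$, and to apply the $M^*$ bound (Theorem \ref{thm:mbound}) to a set $S \subset \mathbb{R}^m$ that (i) contains $\mathbb{A}x_T - b$, (ii) contains the origin (so that a diameter bound automatically bounds the norm of the residual), and (iii) has controllable Gaussian width and radius. My candidate is $S = \theta\mathbb{A}\mathbb{B} - b$, chosen because $\theta\mathbb{A}\mathbb{B}$ is a linear image of the well-understood base $\mathbb{B}$ of the cone of squares.

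First I would verify that $\mathbb{A}x_T - b \in S \cap \ker(T)$: the equality constraints of $(P_T)$ give $T(\mathbb{A}x_T - b)=0$, while $x_T \succeq 0$ together with $\langle e, x_T\rangle \le \theta$ places $x_T$ in $\theta \mathbb{B}$, hence $\mathbb{A}x_T \in \theta\mathbb{A}\mathbb{B}$. Since the claim is vacuous whenever $(P)$ is infeasible (by Theorem \ref{thm:infeasibility}, $(P_T)$ is then also infeasible whp, so no $x_T$ exists), I may assume $(P)$ has an optimal $x^\ast \in \theta\mathbb{B}$ and conclude $b = \mathbb{A}x^\ast \in \theta\mathbb{A}\mathbb{B}$, i.e.~$0 \in S$.

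Next I would bound $w(S)$ and $\mathsf{rad}(S)$ using the three listed properties of Gaussian width: translation invariance, positive homogeneity, and $w(\mathbb{A}K) \le \|\mathbb{A}\|_2 w(K)$. Together they give $w(S) = \theta w(\mathbb{A}\mathbb{B}) \le \theta\|\mathbb{A}\|_2 w(\mathbb{B})$, and since $0 \in S$, $\mathsf{rad}(S) \le \mathsf{diam}(S) = \mathsf{diam}(\theta\mathbb{A}\mathbb{B}) \le \theta\|\mathbb{A}\|_2 \Delta$. Plugging these into Theorem \ref{thm:mbound} yields, with probability at least $1 - 2e^{-u^2}$,
\[\|\mathbb{A}x_T - b\|_2 \;\le\; \mathsf{diam}(S \cap \ker(T)) \;\le\; \frac{\theta\|\mathbb{A}\|_2(C_2 w(\mathbb{B}) + u\Delta)}{\sqrt{d}}.\]
Substituting the standing value $d = \mathcal{C}_0 \log(m)/\varepsilon^2$ fixed at the end of Section \ref{s:rpscp} converts the $1/\sqrt{d}$ factor into a multiple of $\varepsilon/\sqrt{\log(m)}$, and folding $\sqrt{\mathcal{C}_0}$ into the universal constant $C_2$ produces the stated inequality.

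The only nontrivial step will be the choice of $S$: it has to enclose the residual, pass through the origin (this is where feasibility of $(P)$ enters, in exactly one place), and have a Gaussian width that factors cleanly through $\|\mathbb{A}\|_2 w(\mathbb{B})$ rather than through some more exotic object. Once $S = \theta\mathbb{A}\mathbb{B} - b$ is identified, the rest is bookkeeping with the listed properties of $w(\cdot)$ and a direct invocation of the $M^*$ bound.
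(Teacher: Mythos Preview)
Your proposal is correct and follows the same route as the paper: define $S=\theta\mathbb{A}\mathbb{B}-b$, observe $\mathbb{A}x_T-b\in S\cap\ker(T)$, bound $w(S)\le\theta\|\mathbb{A}\|_2 w(\mathbb{B})$ and $\mathsf{rad}(S)\le\theta\|\mathbb{A}\|_2\Delta$ via $0\in S$, then invoke the $M^\ast$ bound and substitute the standing value of $d$. You are in fact slightly more careful than the paper in two places --- you make explicit that $0\in S$ (needed to pass from a diameter bound to a norm bound) requires feasibility of $(P)$, and you spell out the final $\sqrt{d}\to\varepsilon/\sqrt{\log(m)}$ step, correctly flagging that the paper's choice $d=\mathcal{C}_0\log(m)/\varepsilon^2$ yields $\log(m)$ rather than the $\log(n)$ printed in the statement.
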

\begin{proof}
  Consider $S=\{\mathbb{A}x-b \ |\ x \in \theta\mathbb{B}\}\subset \mathbb{R}^m$. By definition, $\mathbb{A}x_T-b\in S$; moreover, $\mathbb{A}x_T-b\in\ker(T)$ since $x_T$ is a solution of $(P_T)$, hence
  \[\mathbb{A}x_T-b\in S\cap\ker(T).\]
  Moreover, 
  \[w(S)=w(\theta \mathbb{A}\mathbb{B}-b)\le \theta \|\mathbb{A}\|_2w(\mathbb{B}).\]
  Furthermore, using the fact that there exists $x \in \mathbb{R}^n$ such that $\mathbb{A}x=b$, we have
  \[\mathsf{rad}(S)\le \|\mathbb{A}\|_2\mathsf{diam}(\theta \mathbb{B})\le \theta \|\mathbb{A}\|_2 \Delta.\]
  The result follows by Thm.~\ref{thm:mbound}. \qed
\end{proof}

\subsection{Error analysis of the retrieved solution}
\label{s:erretr}
We define the retrieved solution $\xapprox$ of $(P)$ as the orthogonal projection of $x_T$ on the feasible set of $(P)$, i.e.~$\xapprox\in \argmin_{x} \{ \|x-x_T\|^2_2 \;|\; \mathbb{A}x=b \}$. By the KKT conditions, $\tilde{x}$ can be computed using the pseudoinverse of $\mathbb{A}$:
\begin{equation}
  \xapprox= x_T +\transpose{\mathbb{A}}(\mathbb{A}\transpose{\mathbb{A}})^{-1}(b-\mathbb{A} x_T). \label{eq:xretr}
\end{equation}
Since $\xapprox$ satisfies $\mathbb{A}x=b$ by construction, we only bound the error of $\xapprox$ w.r.t.~membership in the cone $\mathcal{K}_{\mathbb{E}}$ in terms of the negativity of $\lambda_{\min}(\xapprox)$.
\begin{lem}
For $x,a,b\in E$ such that $x=a+b$, $\lambda_{\min}(x)\ge\lambda_{\min}(a)+\lambda_{\min}(b)$.
\label{lem:eigsum}
\end{lem}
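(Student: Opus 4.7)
The plan is to invoke the variational characterization of $\lambda_{\min}$ recalled at the end of Section 2.2, namely
\[\lambda_{\min}(w) = \min_{u \neq 0} \frac{\langle u, w\circ u\rangle}{\|u\|^2_{\mathbb{E}}}\]
for any $w\in\mathbb{E}$, and then use a standard ``min of a sum dominates the sum of mins'' argument, exactly analogous to the classical Weyl inequality for symmetric matrices.

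Concretely, I would proceed as follows. First, fix an arbitrary $u\neq 0$ in $\mathbb{E}$. Using distributivity of $\circ$ (property (i) of the algebra) together with bilinearity of the inner product $\langle\cdot,\cdot\rangle$, from $x=a+b$ I get
\[\langle u, x\circ u\rangle = \langle u, (a+b)\circ u\rangle = \langle u, a\circ u\rangle + \langle u, b\circ u\rangle,\]
so dividing by $\|u\|^2_{\mathbb{E}}>0$ yields
\[\frac{\langle u, x\circ u\rangle}{\|u\|^2_{\mathbb{E}}} = \frac{\langle u, a\circ u\rangle}{\|u\|^2_{\mathbb{E}}} + \frac{\langle u, b\circ u\rangle}{\|u\|^2_{\mathbb{E}}} \ge \lambda_{\min}(a)+\lambda_{\min}(b),\]
where each term on the right is bounded by the variational characterization applied to $a$ and $b$ respectively, with the \emph{same} $u$.

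Finally, taking the infimum over $u\neq 0$ on the left and once more invoking the variational formula, I conclude $\lambda_{\min}(x)\ge\lambda_{\min}(a)+\lambda_{\min}(b)$, which is the claim. There is no real obstacle here: the only subtlety is to make sure that $\langle u,\cdot\circ u\rangle$ is indeed linear in its middle argument, which follows from property (i) in the definition of a Jordan algebra (distributivity of $\circ$) combined with linearity of $\langle\cdot,\cdot\rangle$, and that the positive definiteness of $\langle\cdot,\cdot\rangle$ (which holds because $\mathbb{E}$ is formally real) guarantees $\|u\|^2_{\mathbb{E}}>0$ for $u\neq 0$, so the variational quotients are well defined.
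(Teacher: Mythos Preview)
Your proof is correct and follows essentially the same route as the paper: both use the variational characterization $\lambda_{\min}(w)=\min_{u\neq 0}\langle u,w\circ u\rangle/\|u\|_{\mathbb{E}}^2$, split $\langle u,(a+b)\circ u\rangle$ by bilinearity, and apply the elementary inequality $\min_u(f(u)+g(u))\ge \min_u f(u)+\min_u g(u)$. Your write-up merely makes the justification of the splitting (distributivity of $\circ$, positive definiteness of the inner product) a bit more explicit than the paper does.
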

\begin{proof}
  For each $y\in\mathbb{E}$ we have $\lambda_{\min}(y) = \min\limits_{u\not=0}\frac{\langle u,y\circ u\rangle}{\|u\|_{\mathbb{E}}^2}$, so
  \vspace*{-0.5em}
  {
  \begin{eqnarray*}
    \lambda_{\min}(x) &=& \min\limits_{u\not=0}\frac{\langle u,(a+b)\circ u\rangle}{\|u\|_{\mathbb{E}}^2} = \min\limits_{u\not=0}\bigg(\frac{\langle u,a\circ u\rangle}{\|u\|_{\mathbb{E}}^2} + \frac{\langle u,b\circ u\rangle}{\|u\|_{\mathbb{E}}^2}\bigg) \\ [-0.3em]
    &\ge& \min\limits_{u\not=0}\frac{\langle u,a\circ u\rangle}{\|u\|_{\mathbb{E}}^2} + \min\limits_{u\not=0} + \frac{\langle u,b\circ u\rangle}{\|u\|_{\mathbb{E}}^2} = \lambda_{\min}(a) + \lambda_{\min}(b).
  \end{eqnarray*}}
\end{proof}

\begin{thm}
  \label{thm:conerr}
  Suppose the inner product in $\mathbb{E}$ is realized by a bilinear form based on the symmetric matrix $Q$. With the notation above, we have: 
  \begin{equation}
    \mathsf{Prob}\left[\lambda_{\min}(\xapprox) \ge \lambda_1 - \varepsilon \theta \kappa(\mathbb{A})\|Q^{\frac{1}{2}}\|_2\frac{C_2w(\mathbb{B})+u \Delta}{\sqrt{\log(n)}}\right] \ge 1-2e^{-u^2}. \label{retr3}
  \end{equation}
\end{thm}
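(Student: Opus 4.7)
The plan is to write the retrieved solution as $\xapprox = x_T + \delta$, where $\delta = \transpose{\mathbb{A}}(\mathbb{A}\transpose{\mathbb{A}})^{-1}(b-\mathbb{A}x_T)$ is the correction term from \eqref{eq:xretr}, so that Lemma~\ref{lem:eigsum} immediately yields $\lambda_{\min}(\xapprox)\ge \lambda_{\min}(x_T) + \lambda_{\min}(\delta)$. Reading $\lambda_1$ in the statement as $\lambda_{\min}(x_T)$ (the smallest eigenvalue in the ordered list of Thm.~\ref{thm:sdt}), the task reduces to producing a lower bound of the desired form on $\lambda_{\min}(\delta)$, valid with probability at least $1-2e^{-u^2}$.

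First I would pass from the algebraic quantity $\lambda_{\min}(\delta)$ to the Euclidean norm of $\delta$. Invoking Thm.~\ref{thm:sdt} on $\delta$ and expanding $\|\delta\|_{\mathbb{E}}^2 = \langle\delta,\delta\rangle$ over its Jordan frame, one obtains $\|\delta\|_{\mathbb{E}}^2 = \sum_i \lambda_i(\delta)^2\,\langle c_i,c_i\rangle$, so each individual eigenvalue is dominated in absolute value by $\|\delta\|_{\mathbb{E}}$ (up to a normalization constant that is $\ge 1$ in the $\mathbb{R}^n$, Lorentz, and $\mathbb{S}^n$ realizations). Since by hypothesis the inner product is realized by the bilinear form with matrix $Q$, this yields $|\lambda_{\min}(\delta)| \le \|\delta\|_{\mathbb{E}} = \|Q^{1/2}\delta\|_2 \le \|Q^{1/2}\|_2\,\|\delta\|_2$.

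Next I would control $\|\delta\|_2$ via the pseudoinverse: the operator norm of $\transpose{\mathbb{A}}(\mathbb{A}\transpose{\mathbb{A}})^{-1}$ equals $1/\sigma_{\min}(\mathbb{A})$, so $\|\delta\|_2 \le \|b-\mathbb{A}x_T\|_2/\sigma_{\min}(\mathbb{A})$. Substituting the feasibility-residual bound of Prop.~\ref{sol:errorbound} for $\|b-\mathbb{A}x_T\|_2$ and recognising $\|\mathbb{A}\|_2/\sigma_{\min}(\mathbb{A}) = \kappa(\mathbb{A})$, the chained inequalities assemble directly into the claimed bound. No additional union bound is needed, since Prop.~\ref{sol:errorbound} is the only probabilistic ingredient in the chain.

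The main obstacle is the step $|\lambda_{\min}(\delta)|\le \|\delta\|_{\mathbb{E}}$: it depends on the precise normalization of the trace inner product $T(x,y) = \mathsf{tr}(L(x\circ y))$ built into the FRJA framework of Sect.~\ref{s:jordan}. In the concrete symmetric cones of interest here, the primitive idempotents of any Jordan frame satisfy $\langle c_i,c_i\rangle \ge 1$, so the inequality holds as stated; for a fully general FRJA any residual normalization factor would simply rescale the constant $\|Q^{1/2}\|_2$, which by construction already encodes how the algebraic inner product embeds into the ambient Euclidean space.
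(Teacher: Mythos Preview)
Your proposal is correct and follows essentially the same route as the paper: decompose $\xapprox=x_T+\delta$, apply Lemma~\ref{lem:eigsum}, bound $|\lambda_{\min}(\delta)|$ by $\|\delta\|_{\mathbb{E}}$ via the spectral decomposition, convert to $\|\delta\|_2$ through $Q^{1/2}$, and then chain the pseudoinverse norm $1/\sigma_{\min}(\mathbb{A})$ with Prop.~\ref{sol:errorbound}. If anything, your discussion of the normalization $\langle c_i,c_i\rangle$ is more careful than the paper's, which simply asserts $\|\tilde{x}-x_T\|_{\mathbb{E}}=\sqrt{\sum_j\mu_j^2}$ ``by definition''.
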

\begin{proof}
  Let $\sum\limits_{j\le r} \lambda_jc_j$ be the spectral decomposition of $x_T$. Since $x_T$ solves $(P_T)$, $x_T\in \mathcal{K}_{\mathbb{E}}$. So its eigenvalues are $0\le \lambda_1 \le \ldots \le \lambda_r$, where $r$ is the degree of $\mathbb{E}$. Suppose that the spectral decomposition of
  \[\tilde{x}-x_T=\transpose{\mathbb{A}}(\mathbb{A}\transpose{\mathbb{A}})^{-1}(b-\mathbb{A} x_T)\]
  is $\sum\limits_{j\le r}\mu_jc'_j$, with $\mu_1 \le \ldots \le \mu_r$. Then
  \begin{equation}
  \lambda_{\min}(\xapprox)\ge\lambda_1 + \mu_1\label{star}
  \end{equation}
by Lemma \ref{lem:eigsum}. We have $\|\tilde{x}-x_T\|_{\mathbb{E}}=\sqrt{\sum_{j\le r} \mu^2_j}$ by definition. Therefore
  \[|\mu_1|=\sqrt{\mu_1 ^2}\le\sqrt{\sum_j\mu_j^2}=\|\tilde{x}-x_T\|_{\mathbb{E}}.\]
  We are going to reason about $\|\tilde{x}-x_T\|_2$ instead, and then use the fact that $\langle a,b\rangle=\transpose{a}Qy$, which will allow us to derive a bound on $\|\tilde{x}-x_T\|_{\mathbb{E}}$ from a bound on $\|\tilde{x}-x_T\|_2$. By Eq.~\eqref{eq:xretr}, we have
  \[\|\tilde{x}-x_T\|_2\le  \|\transpose{\mathbb{A}}(\mathbb{A}\transpose{\mathbb{A}})^{-1}\|_2\|\mathbb{A} x_T -b\|_2.\]
  If $U\Sigma\transpose{V}$ is the Singular Value Decomposition (SVD) of $\mathbb{A}$, a straightforward computation yields $V\transpose{\Sigma}(\Sigma\transpose{\Sigma})^{-1}\transpose{U}$ as the SVD of $\transpose{\mathbb{A}}(\mathbb{A}\transpose{\mathbb{A}})^{-1}$, which implies that $\|\transpose{\mathbb{A}}(\mathbb{A}\transpose{\mathbb{A}})^{-1}\|_2 =\frac{1}{\sigma_{\min}}$ where $\sigma_{\min}$ is the smallest singular value of $\mathbb{A}$. Moreover, by Prop.~\ref{sol:errorbound}, we have
  \[\|\mathbb{A} x_T -b\|_2 \le \varepsilon \theta\|\mathbb{A}\|_2(C_2w(\mathbb{B})+u \Delta)/\sqrt{\log(n)}\]
  whp. Hence we obtain
  \begin{equation}
  \|\tilde{x}-x_T\|_2 \le \varepsilon \theta \kappa(\mathbb{A})(C_2w(\mathbb{B})+u \Delta)/\sqrt{\log(n)}, \label{Ssign}
  \end{equation}
  where $\kappa(\mathbb{A})=\frac{\|\mathbb{A}\|_2}{\sigma_{\min}}$ is the condition number of $\mathbb{A}$. Furthermore, if $\bar{z}$ is an upper bound to $\|z\|_2$, then
  \[\|z\|_{\mathbb{E}}=\langle z,z\rangle=\transpose{z}Qz=\transpose{(zQ^{\frac{1}{2}})}Q^{\frac{1}{2}}z=\|Q^{\frac{1}{2}}z\|_2\]
  (since the inner product in $\mathbb{E}$ is positive definite), whence $\|z\|_{\mathbb{E}}\le\|Q^{\frac{1}{2}}\|_2\|z\|_2$. Finally, by Eq.~\eqref{star} and \eqref{Ssign}, we get Eq.~\eqref{retr3} as claimed.\qed
\end{proof}

As regards optimality, $\langle c,\tilde{x}\rangle = \langle c,x_T\rangle + \langle c, \transpose{\mathbb{A}}(\mathbb{A}\transpose{\mathbb{A}})^{-1}(b-\mathbb{A}x_T)\rangle$, so by Cauchy-Schwartz we get
\[|\langle c,\tilde{x}\rangle -  \langle c,x_T\rangle| \le  \|c\|_2 \|\transpose{\mathbb{A}}(\mathbb{A}\transpose{\mathbb{A}})^{-1}\|_2\|b-\mathbb{A}x_T\|_2.\]
By the proof of Thm.~\ref{thm:conerr} $\|\transpose{\mathbb{A}}(\mathbb{A}\transpose{\mathbb{A}})^{-1}\|_2=\frac{1}{\sigma_{\min}}$, and we can bound $\|b-\mathbb{A}x_T\|_2$  by Prop.~\ref{sol:errorbound}, yielding (with the above notation):
\[ \mathsf{Prob} \left[ |\langle c,\tilde{x}\rangle -  \langle c,x_T\rangle| \le \varepsilon \theta\kappa(\mathbb{A})\|c\|_2\frac{C_2w(\mathbb{B})+u \Delta}{\sqrt{\log(n)}} \right] \ge 1-2e^{-u^{2}}. \]

\section{Computational validation}
\label{s:compres}
All our tests were carried out on a core of a quad-core Intel Xeon 2.1GHz CPU of a 8-CPU system with 64GB RAM running CentOS Linux, using Julia 0.6.1 and the {\sc Mosek} 9.1.5 \cite{mosek9} SDP solver. The general structure of our algorithm is very simple:
\begin{enumerate}
\item solve the original SCP and reduce it to standard form;
\item sample a RP and project the equality constraints of the SCP
\item solve the projected SCP and retrieve its approximate solution
\item compare the original and retrieved solutions on errors and CPU time.
\end{enumerate}
We employed Achlioptas' RP matrices \cite{achlioptas} in the ``sparser'' variant of \cite{kane} with density $0.1$. 

In order to validate our approach, we need to establish that: (a) projected versions of infeasible instances do not turn out to be feasible excessively often; and (b) the optimality and feasibility errors of retrieved solutions are not too large. We consider a set of randomly generated SDP instances. The $\varepsilon$ parameter in Defn.~\ref{defn:rp} was set to $0.2$ for the feasible set, and to $0.13$ for the infeasible set. All random coefficients in the instance data were sampled from a uniform distribution on $[0,1]$. 

\subsection{Infeasible instances}
We generated two groups of 5 infeasible instances. The generation details (number $m$ of equality constraints, number $n$ of variables, number $d$ of equality constraints in the projected problem, density \textsf{dens} of generated matrices) are reported in the first five columns of Table \ref{t:infeasible}.
\begin{table}[!ht]
  {\small
\begin{center}
\begin{tabular}{|rrrr|rr|rr|}\hline
  $m$ & $n$ & $d$ & \textsf{dens} & $P$ & $P_T$ & cpu & cpu${}_T$ \\ \hline
   1000 &  820 & 716 & 0.5 & Infeas & 100\% & 5.44 & 2.92 \\
   1000 & 1275 & 763 & 0.5 & Infeas & 100\% & 6.19 & 4.73 \\
  \hline
\end{tabular}
\end{center}
}
\caption{Infeasible instances. See Table \ref{t3} for detailed results.}
\label{t:infeasible}
\end{table}
The results for the feasible set are presented in Table \ref{t:infeasible}. We report the solution status obtained by the solver on $P$ (always infeasible over all instances in all groups), the percentage of success in detecting infeasibility when solving $P_T$, and the CPU time taken to solve $P$ and $P_T$. We remark that, by ``solving $P_T$'', we mean: (i) sample the RP, (ii) project the problem, and (iii) actually solve it. The success score was perfect. Results worsened when increasing $\varepsilon$: with $\varepsilon=0.14$ the success rate was 80\%; with $\varepsilon=0.2$ the success rate was 0\%. 

\subsection{Feasible instances}
We generated three groups of 10 feasible instances. As above, $m,n,d,\mathsf{dens}$ are reported in Table \ref{t:feasible}. The first and second groups both have identity cost matrices, but different generation densities; the third group has a random cost matrix. 
\begin{table}[!ht]
  {\footnotesize
\begin{center}
\begin{tabular}{|rrrr|rrrrr|rr|}\hline
 $m$ & $n$ & $d$ & \textsf{dens} & $\val{P}$ & $\val{P_T}$ & {\color{gray}err} & $\langle c,\xapprox\rangle$ & {\color{gray}err} & cpu & cpu${}_T$ \\ \hline
2000 & 1540 & 332 & 0.2 & 1540.630 & 461.289 & 0.7 & 1540.630 & 0.0 & 8.55 & 6.76 \\
2000 & 1540 & 332 & 0.5 & 1540.557 & 468.083 & 0.7 & 1540.557 & 0.0 & 17.28 & 8.31 \\
4000 & 1830 & 340 & 0.1 & 87.133 & -3126.177 & 1.0 & 87.133 & 0.0 & 23.68 & 11.11
\\ \hline
\end{tabular}
\end{center}
}
\caption{Feasible instances. See Table \ref{t4} for detailed results.}
\label{t:feasible}
\end{table}
The results for the feasible set are presented in Table \ref{t:feasible} (each line reports averages over the corresponding group). We report the optimal objective function value $\val{P}$ of the original problem, the optimal objective function value $\val{P_T}$ of the projected problem and the relative error
\[\frac{\val{P}-\val{P_T}}{\max(|\val{P}|,|\val{P_T}|)},\] the objective function value $\langle c,\xapprox\rangle$ of the retrieved solution $\xapprox$ and the relative error
\[\frac{\val{P}-\langle c,\xapprox\rangle}{\max(|\val{P}|,|\langle c,\xapprox\rangle|)},\]
the CPU time taken to solve $P$ and the CPU time taken to solve $P_T$. By ``solving $P_T$'' we mean: (i) sample the RP, (ii) project the problem, (iii) actually solve it, and (iv) retrieve the solution for $P$. Standard deviations on CPU time are between 1\% and 5\% of the averages.

In each case, our methodology was able to find the optimal solution of the original problem exactly, namely $\xapprox=x^\ast$, in roughly half the time. Accordingly, $\val{P}=\langle c,\xapprox\rangle$ for all instances; moreover, infeasibility and negative definiteness errors were always zero.

As is typical for RP-based methods, the ratio between projected and original CPU time decreases for increasing problem size: this is due to the fact that the number of constraints increases only logarithmically in the number of variables. Moreover, denser instances also gain a CPU advantage, due to the fact that denser problem take more time to solve, whereas projected problems are already typically denser than their original counterparts. The perfect score in solution quality is much less typical \cite{jllmor,ipco19}. While it certainly validates the approach, this type of success rate may well fail to hold for SDPs and other conic programs from real applications.

\begin{table}[!ht]
\begin{center}
{\small
\begin{tabular}{|r|rrrr|rr|rr|}\hline
  ID & $m$ & $n$ & $d$ & \textsf{dens} & $P$ & $P_T$ & cpu & cpu${}_T$ \\ \hline
  1 & 1000 &  820 & 716 & 0.5 & Infeas & Infeas & 5.39 & 2.94 \\
  2 & 1000 &  820 & 716 & 0.5 & Infeas & Infeas & 5.45 & 2.91 \\
  3 & 1000 &  820 & 716 & 0.5 & Infeas & Infeas & 5.43 & 2.90 \\
  4 & 1000 &  820 & 716 & 0.5 & Infeas & Infeas & 5.48 & 2.94 \\
  5 & 1000 &  820 & 716 & 0.5 & Infeas & Infeas & 5.46 & 2.91 \\ \hline
  1 & 1000 & 1275 & 763 & 0.5 & Infeas & Infeas & 6.06 & 4.70 \\
  2 & 1000 & 1275 & 763 & 0.5 & Infeas & Infeas & 6.11 & 4.76 \\
  3 & 1000 & 1275 & 763 & 0.5 & Infeas & Infeas & 6.15 & 4.70 \\
  4 & 1000 & 1275 & 763 & 0.5 & Infeas & Infeas & 6.53 & 4.80 \\
  5 & 1000 & 1275 & 763 & 0.5 & Infeas & Infeas & 6.09 & 4.71 \\
  \hline
\end{tabular}
}
\end{center}
\caption{Detailed results on infeasible instances.}
\label{t3}
\end{table}

\begin{table}[!ht]
\begin{center}
{\footnotesize
\begin{tabular}{|r|rrrr|rrr|rr|}\hline
ID & $m$ & $n$ & $d$ & \textsf{dens} & $\val{P}$ & $\val{P_T}$ & $\langle c,\xapprox\rangle$ & cpu & cpu${}_T$ \\ \hline
1 & 2000 & 1540 & 332 & 0.2 & 1562.983 & 479.373 & 1562.983 & 8.53 & 6.55 \\
2 & 2000 & 1540 & 332 & 0.2 & 1508.010 & 435.141 & 1508.010 & 8.47 & 6.79 \\
3 & 2000 & 1540 & 332 & 0.2 & 1512.362 & 467.159 & 1512.362 & 8.70 & 6.68 \\
4 & 2000 & 1540 & 332 & 0.2 & 1554.651 & 482.174 & 1554.651 & 8.61 & 6.76 \\
5 & 2000 & 1540 & 332 & 0.2 & 1536.599 & 451.946 & 1536.599 & 8.59 & 6.79 \\
6 & 2000 & 1540 & 332 & 0.2 & 1570.673 & 475.775 & 1570.673 & 8.53 & 6.77 \\
7 & 2000 & 1540 & 332 & 0.2 & 1556.245 & 473.629 & 1556.245 & 8.52 & 6.70 \\
8 & 2000 & 1540 & 332 & 0.2 & 1505.491 & 463.241 & 1505.491 & 8.52 & 6.87 \\
9 & 2000 & 1540 & 332 & 0.2 & 1564.126 & 463.902 & 1564.126 & 8.51 & 6.94 \\
10 & 2000 & 1540 & 332 & 0.2 & 1535.160 & 420.553 & 1535.160 & 8.55 & 6.70 \\ \hline
1 & 2000 & 1540 & 332 & 0.5 & 1547.108 & 449.995 & 1547.108 & 16.84 & 8.16 \\
2 & 2000 & 1540 & 332 & 0.5 & 1535.504 & 458.891 & 1535.504 & 17.12 & 8.43 \\
3 & 2000 & 1540 & 332 & 0.5 & 1514.214 & 478.169 & 1514.214 & 17.45 & 8.02 \\
4 & 2000 & 1540 & 332 & 0.5 & 1571.305 & 457.359 & 1571.305 & 17.32 & 8.31 \\
5 & 2000 & 1540 & 332 & 0.5 & 1529.055 & 471.201 & 1529.055 & 17.37 & 8.28 \\
6 & 2000 & 1540 & 332 & 0.5 & 1548.015 & 492.389 & 1548.015 & 17.49 & 8.90 \\
7 & 2000 & 1540 & 332 & 0.5 & 1527.581 & 445.974 & 1527.581 & 17.24 & 8.09 \\
8 & 2000 & 1540 & 332 & 0.5 & 1518.795 & 445.112 & 1518.795 & 17.23 & 8.65 \\
9 & 2000 & 1540 & 332 & 0.5 & 1550.371 & 474.559 & 1550.371 & 17.35 & 8.05 \\
10 & 2000 & 1540 & 332 & 0.5 & 1563.617 & 507.185 & 1563.617 & 17.36 & 8.20 \\ \hline
1 & 4000 & 1830 & 340 & 0.1 & 45.489 & -3080.881 & 45.489 & 22.19 & 10.97 \\
2 & 4000 & 1830 & 340 & 0.1 & 128.385 & -3181.674 & 128.385 & 24.27 & 10.72 \\
3 & 4000 & 1830 & 340 & 0.1 & 100.892 & -3300.381 & 100.892 & 24.50 & 10.99 \\
4 & 4000 & 1830 & 340 & 0.1 & 88.365 & -2941.452 & 88.365 & 24.30 & 11.09 \\
5 & 4000 & 1830 & 340 & 0.1 & 155.619 & -3005.528 & 155.619 & 22.14 & 11.21 \\
6 & 4000 & 1830 & 340 & 0.1 & 77.195 & -3353.770 & 77.195 & 24.01 & 11.62 \\
7 & 4000 & 1830 & 340 & 0.1 & 38.441 & -3021.306 & 38.441 & 24.40 & 11.08 \\
8 & 4000 & 1830 & 340 & 0.1 & 30.648 & -3020.162 & 30.648 & 24.87 & 11.08 \\
9 & 4000 & 1830 & 340 & 0.1 & 135.078 & -2985.326 & 135.078 & 24.39 & 11.27 \\
10 & 4000 & 1830 & 340 & 0.1 & 71.217 & -3371.290 & 71.217 & 21.69 & 11.09 \\ \hline
\multicolumn{8}{|r|}{\it Average\ } & {\it 16.50} & \textit{\textbf{8.73}} \\ \hline
\end{tabular}
}
\end{center}
\caption{Detailed results on feasible instances.}
\label{t4}
\end{table}


\ifels
\bibliographystyle{plain}
\else
\bibliographystyle{plain}
\fi
\bibliography{dr2}

\end{document}